\newtheorem{thm}{theorem}[section]
\newtheorem{theorem}[thm]{Theorem}
\newtheorem{proposition}[thm]{Proposition}
\newtheorem{lemma}[thm]{Lemma}
\newtheorem{corollary}[thm]{Corollary}
\newtheorem{remark}[thm]{Remark}
\begin{document}

\title[$\mathbb{Z}$-graded identities of the Lie algebras $U_1$ in characteristic 2]{$\mathbb{Z}$-graded identities of the Lie algebras $U_1$ in characteristic 2}
\author[Fidelis]{Claudemir Fidelis}
\thanks{C. Fidelis was supported by FAPESP grant No.~2019/12498-0}
\address{Unidade Acad\^emica de Matem\'atica, Universidade Federal de Campina Grande, Campina Grande, PB, 58429-970, Brazil
	\\ and \\Instituto de Matem\'atica e Estat\'istica da Universidade de S\~ao Paulo, SP, 05508-090, Brazil
}
\email{claudemir@mat.ufcg.edu.br}

\author[Koshlukov]{Plamen Koshlukov}
\thanks{P. Koshlukov was partially supported by FAPESP grant No.~2018/23690-6 and by CNPq grant No.~302238/2019-0}
\address{Department of Mathematics, UNICAMP, 13083-859 Campinas, SP,  Brazil}
\email{plamen@unicamp.br}

\begin{abstract}
Let $K$ be any field of characteristic two and let $U_1$ and $W_1$ be the Lie algebras of the derivations of the algebra of Laurent polynomials $K[t,t^{-1}]$ and of the polynomial ring $K[t]$, respectively. The algebras $U_1$ and $W_1$ are equipped with natural $\mathbb{Z}$-gradings. In this paper, we provide bases for the graded identities of $U_1$ and $W_1$, and we prove that they do not admit any finite basis.
\medskip

\noindent
\textbf{Keywords:} Graded identities; Graded Lie algebra; Infinite basis of identities.

\medskip

\noindent
\textbf{Mathematics Subject Classification 2010:} 16R10, 17B01, 17B65, 17B66, 17B70.
\end{abstract}
\maketitle

\section{Introduction}

The description of the polynomial identities satisfied by an algebra depends heavily on the base field. If the field is of characteristic 0, one may consider multilinear polynomial identities since they determine all identities of a given algebra. One of the main tools in this case is  the representation theory of the symmetric and of the general linear groups, and refinements, see for a detailed account the monographs \cite{drbook, ZG}. When the field $K$ is infinite, one has to consider multihomogeneous identities, see for example \cite[Section 4.2]{drbook}. The methods one uses in this case are mostly based on Invariant theory \cite{CC}. Finally, if $K$ is a finite field then neither of the above types of identities is sufficient. As a rule, neither of the methods described works properly. Instead one uses structure theory  \cite{KS,kruse,lvov}, together with combinatorics based on the properties of finite fields \cite{OatesP}.

In spite of the extensive research in this area little is known about the concrete form of the identities satisfied by a given algebra. The monographs \cite{drbook, ZG}, and their references, give a good account on the results already obtained. But it should be stressed that the list of algebras whose identities are known is very short and easy to reproduce. In the associative case one knows the identities of the matrix algebras $M_2(K)$ for infinite fields of characteristic different from 2 (see for example \cite{razmbook,kjam2}).
The identities of the Grassmann algebra $E$ are also known, see for example \cite[Section 5.1]{drbook}, as well as those of its tensor square $E\otimes E$ \cite{popov}. Add here the upper triangular matrices of any order $n$, $UT_n(K)$, and that is it. In the case of Lie algebras one knows the identities of $UT_n(K)$ (as a Lie algebra), and also of $sl_2(K)$ over infinite fields \cite{razmbook, vasal}. We only mention that even the identities of $M_3(K)$, over a field of characteristic 0, are not known. A ``nagging'' and long standing problem is to determine whether the identities of $M_2(K)$ admit a finite basis whenever $K$ is an infinite field of characteristic 2. 

The theory developed by A. Kemer in the eighties \cite{basekemer} led him to the positive solution of the famous Specht problem: Is every ideal of identities of an associative algebra in characteristic 0 finitely generated as an ideal of identities? Kemer's results are far from constructive and do not provide concrete bases of the identities of a given algebra. 

We have outlined above some of the reasons that led researchers to look for other types of polynomial identities. These include identities with involution, weak identities, group graded identities. We shall not comment on the former two types of identities as we work exclusively with the latter type. Graded identities appeared in Kemer's research; his methods rely heavily on associative superalgebras (that is $\mathbb{Z}_2$-graded algebras). Later on gradings on algebras and their graded identities became an important part of PI theory. The gradings on matrix algebras were described in \cite{bsz}, see also \cite{bz}, assuming the base field is algebraically closed.

The matrix algebra $M_n(K)$ admits natural gradings by the groups $\mathbb{Z}_n$ and $\mathbb{Z}$. The corresponding graded identities were described in \cite{vasca,vaspams} in characteristic 0, and in \cite{ssaca, ssaserd} in positive characteristic. An extensive research on gradings and graded identities for classes of important algebras has been conducted, we direct the interested reader to \cite{pkfyja} for further information and references. 

The simple finite dimensional Lie algebras over an algebraically closed field of characteristic 0 are well known. In the infinite dimensional case the so-called algebras of Cartan type appear. We denote by $W_1$ the Lie algebra of derivations of the polynomial ring in one variable $K[t]$, and by $U_1$ the algebra of derivations of the Laurent polynomials $K[t,t^{-1}]$. The former is known as the Witt algebra, the latter gives rise to the Virasoro algebra. Both algebras have canonical gradings by the group $\mathbb{Z}$ such that every non-zero component is one dimensional. The algebras $W_1$ and $U_1$ appear naturally in various branches of Physics and Mathematics, the interested reader can consult the paper \cite{Huang} and its references for an extensive treatment of this topic. 

The $n$-variable analogues of $W_1$ and $U_1$, the algebras $W_n$ and $U_n$ are defined as the derivations of the corresponding polynomial and Laurent polynomial rings in $n$ variables. These were first studied around 1910 by E. Cartan in his classification of simple Lie algebras in characteristic 0. Later on it was discovered that these algebras, although not simple in positive characteristic, produce naturally various simple finite dimensional Lie algebras as their homomorphic images. The celebrated theorems of V. Kac \cite{kac, kacbook} classify the simple Lie algebras graded by $\mathbb{Z}$ under some natural conditions. Namely the algebra $L=\oplus_{i\in \mathbb{Z}} L_i$ must be of polynomial growth: $\sum_{j\le i} \dim L_j$ grows like a polynomial in $i$; $L_0$ acts irreducibly on $L_{-1}$, and $L$ is generated by degrees 0 and $\pm 1$.  O.  Mathieu \cite{olmat} classified the simple $\mathbb{Z}$-graded Lie algebras of polynomial growth. The algebras $W_n$ and $U_n$ were also studied by Kaplansky \cite{kapl}.

The graded identities for the algebra $W_1$ were described in \cite{FKK}, assuming that the field is of characteristic 0. The ones for $U_1$, with its canonical grading by the group $\mathbb{Z}$ were recently obtained in \cite{CP}, over an infinite field of characteristic different from two. As  a consequence, the main result of \cite{FKK} was generalized. 

In this paper we study the graded identities satisfied by the Lie algebra $U_1$, equipped with its natural $\mathbb{Z}$-grading, over an arbitrary field of characteristic two. We produce a basis of the graded identities for $U_1$. As a consequence we obtain a basis of those for $W_1$ as well. Furthermore, we prove that the graded identities of $U_1$, as well as these of $W_1$, do not admit any finite basis. The counterparts of these results over an infinite field of characteristic different from two, were obtained in \cite{CP,FKK}.

The ordinary identities of $W_1$ coincide with the identities of the Lie algebra of the vector fields on the line if $K=\mathbb{R}$ is the real field. The standard Lie polynomial of order $4$ (which is of degree 5 and is alternating in 4 of its variables) is an identity of $W_1$. On the other hand, it is a long-standing open problem to determine a basis of the identities satisfied by $W_1$. The vector space of $W_n$, the derivations of the polynomial ring in $n$ variables, can be given the structure of a left-symmetric algebra, denoted by $L_n$. In \cite{koum} the authors studied the right-operator identities of $L_n$, and described a large class of general identities for $L_n$.  We hope that our results about the $\mathbb{Z}$-graded identities of $U_1$ may shed additional light on the polynomial identities satisfied by $W_1$, and consequently by $U_1$.

\section{Definitions and preliminary results} \label{Preliminaries}

We fix a field $K$, all algebras and vector spaces we consider will be over $K$. If $A$ is an associative algebra one defines on the vector space of $A$ the Lie bracket $[a,b]=ab-ba$. Denote by $A^{(-)}$ the Lie algebra thus obtained, the Poincar\'e--Birkhoff--Witt theorem yields that every Lie algebra is a subalgebra of some $A^{(-)}$.

Let $L$ be an algebra (not necessarily associative) and let $G$ be a group. A $G$-grading on $L$ is a vector space decomposition
\begin{align}\label{gr}
\Gamma\colon L=\oplus_{g\in G}L_g
\end{align}
such that $L_gL_h\subseteq L_{gh}$, for all $g$, $h\in G$. In this case one says that $L$ is $G$-graded. The subspaces $L_g$ are the homogeneous components of the grading and a non-zero element
$a$ of $L$ is homogeneous if $a\in L_g$ for some $g\in G$; we denote this by $\|a\|_G=g$ (or simply $\|a\|=g$ when the group $G$ is clear from the context). The support of the grading is the set $\mathrm{supp}\ L=\{g\in G \mid L_g\neq 0\}$. A subalgebra (an ideal, a subspace) $B$ of $A$ is a graded subalgebra (respectively ideal, subspace) if $B=\oplus_{g\in G} A_{g}\cap B$. 

The first example is the Witt algebra $W_1=Der(K[t])$. It is the Lie algebras of the derivations of the polynomial ring $K[t]$. The elements $e_n=t^{n+1}d/dt$, $n\geq -1$, form a basis of $W_1$. The Lie algebra structure on the vector space $W_1$ is given by the multiplication
\begin{equation}\label{multiwitt}
[e_i, e_j] = (j-i)e_{i+j}.
\end{equation}
The algebra $W_1$ has a $\mathbb{Z}$-grading,
$W_1=\oplus_{i\in\mathbb{Z}}L_i$, where $L_i=0$ whenever $i\leq -2$, and $L_i$ is the (one-dimensional) span of $e_i$ if $i\geq -1$. Thus the element $e_n$ is homogeneous of degree $n$. 

Another example of graded algebras is the algebra $U_1$. Let $A=K[t,t^{-1}]$ be the algebra of Laurent polynomials in one variable $t$. Then $U_1$ is the Lie algebras of the derivations of $A$. The elements $e_n=t^{n+1}d/dt$, $n\in\mathbb{Z}$, form a basis of $U_1$; the multiplication in $U_1$ is also given by \eqref{multiwitt}. The algebra $U_1$ is $\mathbb{Z}$-graded, $U_1=\bigoplus_{i\in\mathbb{Z}}L_i$ where $L_i$ is the span of $e_i$, for each $i\in\mathbb{Z}$. This means that $U_1$ has full support on $\mathbb{Z}$, in other words $\mathrm{supp}\, U_1= \mathbb{Z}$.

Since we shall work with the above two graded algebras we will refrain from giving other examples. 

Let $ X=\cup_{i\in\mathbb{Z}}X_{i}$ be the disjoint union of infinite countable sets of variables $X_{i}=\{x_{1}^{i},x_{2}^{i},\ldots\}$, $i\in\mathbb{Z}$. Assuming that for each $i\in\mathbb{Z}$ the elements of the set $X_{i}$ are of $\mathbb{Z}$-degree $i$, the free associative algebra $K\langle X_\mathbb{Z}\rangle$ has a natural $\mathbb{Z}$-grading $\oplus_{i\in\mathbb{Z}} K\langle X_\mathbb{Z}\rangle^{i}$. Here $K\langle X_\mathbb{Z}\rangle^{i}$ is the vector subspace of $K\langle X_\mathbb{Z}\rangle$ spanned by all monomials of $\mathbb{Z}$-degree $i$. The subalgebra $L\langle X_\mathbb{Z} \rangle$ of $K\langle X_\mathbb{Z}\rangle^{(-)}$ generated by the set $X_{\mathbb{Z}}$ is the free $\mathbb{Z}$-graded Lie algebra, freely generated by $X_\mathbb{Z}$. Note that $L\langle X_\mathbb{Z} \rangle$ is a graded subspace of $K\langle X_\mathbb{Z} \rangle$ and that the corresponding decomposition gives a $\mathbb{Z}$-grading on $L\langle X_\mathbb{Z} \rangle$. The elements of $L\langle X_\mathbb{Z} \rangle$ are called $\mathbb{Z}$-graded polynomials (or simply polynomials). The degree of a polynomial $f$ in $x_i^{a_i}$, denoted by $\deg_{x_i^{a_i}}f$, counts how many times the variable $x_i^{a_i}$ appears in the monomials of $f$, and it is defined in the usual way. The definitions of multilinear and multihomogeneous polynomials are the natural ones. We define the (left-normed) commutator $[l_1,\cdots, l_n]$ of $n\geq 2$ elements $l_1$, \dots, $l_n$ in a Lie algebra $L$ inductively,  $[l_1,\cdots, l_n]=[[l_1,\cdots,l_{n-1}],l_n]$ for $n>2$.

Let $L=\oplus_{i\in \mathbb{Z}}L_i$ be a Lie algebra with a $\mathbb{Z}$-grading. An admissible substitution for the polynomial $f(x_{1}^{a_1},\ldots, x_{n}^{a_n})$ in $L$ is an $n$-tuple $(l_1,\ldots, l_n)\in L^{n}$ such that $l_i\in L_{a_i}$, for $i=1$, \dots, $n$. If $f(l_1,\dots,l_n)=0$ for every admissible substitution $(l_1,\dots, l_n)$ we say that $f(x_{1}^{a_1},\dots, x_{n}^{a_n})$ is a graded identity for $L$. The set of $\mathbb{Z}$-graded polynomial identities of $L$ will be denoted by $T_\mathbb{Z}(L)$. It is a $T_\mathbb{Z}$-ideal, that is an ideal invariant under the endomorphisms of $L\langle X_\mathbb{Z} \rangle$ as a graded algebra. The intersection of a family of $T_\mathbb{Z}$-ideals in $L\langle X_\mathbb{Z} \rangle$ is a $T_\mathbb{Z}$-ideal; given a set of polynomials $S\subseteq L\langle X_\mathbb{Z}\rangle$ we denote by $\langle S \rangle_\mathbb{Z}$ the intersection of the $T_\mathbb{Z}$-ideals of $L\langle X_\mathbb{Z} \rangle$ that contain $S$. We call $\langle S \rangle_\mathbb{Z}$ the $T_\mathbb{Z}$-ideal generated by $S$, and refer to $S$ as a basis of this $T_\mathbb{Z}$-ideal. It is well known that in characteristic 0, every $T_\mathbb{Z}$-ideal $T_\mathbb{Z}(L)$ is generated by its multilinear polynomials. Over an infinite field of positive characteristic one has to take into account the multihomogeneous polynomials instead of the multilinear ones.

In this paper, unless otherwise stated, $K$ denotes a field of characteristic two. We do not require any further restrictions on $K$. Our main result is the following theorem which provides a basis of the $\mathbb{Z}$-graded identities for $U_1$.

\begin{theorem}\label{mainresult1}
	Let $K$ be a field of characteristic two. The ideal of the graded identities of $U_1$ is generated, as a $T_\mathbb{Z}$-ideal, by the polynomials
	\begin{equation}\label{Cbasis1}
	[x_1^a,x_2^b] \equiv 0,
	\end{equation}
	where $a$ and $b$ are integers of the same parity, that is $a\equiv b\pmod{2}$.
\end{theorem}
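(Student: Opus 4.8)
The plan is to set $I=\langle\, [x_1^a,x_2^b] : a\equiv b \pmod 2\,\rangle_\mathbb{Z}$ and to establish the two inclusions $I\subseteq T_\mathbb{Z}(U_1)$ and $T_\mathbb{Z}(U_1)\subseteq I$ separately. The first is immediate: an admissible substitution sends $x_1^a\mapsto \alpha e_a$ and $x_2^b\mapsto \beta e_b$, so by \eqref{multiwitt} we get $[x_1^a,x_2^b]\mapsto \alpha\beta(b-a)e_{a+b}$, and $b-a$ is even, hence zero in $K$, whenever $a\equiv b\pmod 2$. For the reverse inclusion I would exhibit a spanning set of the relatively free algebra $L\langle X_\mathbb{Z}\rangle/I$ and then show that its elements remain linearly independent as functions on $U_1$; since $I\subseteq T_\mathbb{Z}(U_1)$, this forces the two $T_\mathbb{Z}$-ideals to coincide.

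The combinatorial core is a normal form modulo $I$. First I would record the two basic consequences of the defining relations: two homogeneous elements whose $\mathbb{Z}$-degrees have the same parity commute modulo $I$, and, using the Jacobi identity together with the characteristic-two symmetry $[u,v]=[v,u]$, one has $[[u,v],w]\equiv[[u,w],v]\pmod I$ whenever $\|w\|\equiv\|v\|\pmod 2$, because the discrepancy is $[u,[v,w]]$ with $[v,w]\in I$. Using these rules I would argue that in a left-normed commutator $[y_1,\dots,y_n]$ the parities of the partial sums of the degrees force all but exactly one of the $\|y_i\|$ to be even for the commutator to survive modulo $I$; any monomial containing zero, or two or more, odd-degree entries lies in $I$. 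The two reduction rules then let me move the unique odd entry to the front and sort the remaining even entries, so that modulo $I$ every polynomial is a linear combination of normal commutators $[x_{i_0}^{a_0},x_{i_1}^{a_1},\dots,x_{i_m}^{a_m}]$ with $a_0$ odd, $a_1,\dots,a_m$ even and $(i_1,a_1)\le\cdots\le(i_m,a_m)$; in particular each multihomogeneous component is spanned by a single such commutator.

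To prove that these normal commutators are not identities, and more generally independent modulo $T_\mathbb{Z}(U_1)$, I would evaluate $x_j^{a}\mapsto e_a$. Iterating \eqref{multiwitt} gives
\begin{equation*}
[e_{a_0},e_{a_1},\dots,e_{a_m}]=\Big(\prod_{k=1}^{m}\big(a_k-(a_0+\cdots+a_{k-1})\big)\Big)e_{a_0+\cdots+a_m},
\end{equation*}
and since $a_0$ is odd while $a_1,\dots,a_m$ are even, every partial sum $a_0+\cdots+a_{k-1}$ is odd, so each factor $a_k-(a_0+\cdots+a_{k-1})$ is odd and the whole coefficient is nonzero in characteristic two. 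Thus each normal commutator is a non-identity. Distinct normal commutators have distinct multidegrees, so over an infinite field the standard multihomogeneous separation shows that any nontrivial linear combination of them is again a non-identity; hence the spanning set of the previous paragraph is linearly independent modulo $T_\mathbb{Z}(U_1)$, and $T_\mathbb{Z}(U_1)=I$ follows.

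I expect the main obstacle to be the passage from \emph{spanning} to \emph{independence}, that is, the separation of distinct multidegrees inside $U_1$. Over an infinite field this is the usual Vandermonde argument, but it is precisely the delicate point in general: a Frobenius phenomenon (for instance $[x_1^a,x_1^0,x_1^0]$ and $[x_1^a,x_1^0]$ take the same values under scalar substitutions over the prime field, while they are distinct normal commutators) shows that distinct normal commutators need not be separated by substitutions over a finite field, so the evaluation map of $L\langle X_\mathbb{Z}\rangle/I$ into $U_1$ must be analysed with care in that regime. The reduction to normal form is laborious but routine; the genuine work is verifying that no further collapsing occurs, i.e.\ that the normal commutators constitute an honest basis of $L\langle X_\mathbb{Z}\rangle/T_\mathbb{Z}(U_1)$.
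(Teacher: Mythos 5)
Your positive steps are all correct and track the paper closely: the inclusion $I\subseteq T_\mathbb{Z}(U_1)$ is the paper's Lemma~\ref{3.6}; your parity analysis (a left-normed commutator with no odd-degree entry, or with at least two, lies in $I$), together with moving the unique odd entry to the front and sorting the even ones, is the content of Proposition~\ref{monident} and Lemma~\ref{1111}; and your evaluation formula, showing each normal commutator has coefficient a product of odd integers, hence $1$ in characteristic two, is also right. Where you diverge is the endgame: the paper first reduces to multihomogeneous identities (Lemma~\ref{homogeneo}), then to multilinear ones (Corollary~\ref{identimult}), and finishes by collapsing a multilinear identity modulo $I$ to a scalar times a single normal monomial; you instead exhibit the normal commutators as a spanning set of $L\langle X_\mathbb{Z}\rangle/I$ and argue that they stay linearly independent as functions on $U_1$. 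Over an infinite field your multidegree-separation argument is valid, so in that case your proposal is a correct and somewhat more transparent proof.

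The finite-field gap you flag, however, is not a repairable defect of your method: it is a counterexample to the statement itself, and your Frobenius example is exactly it. In characteristic two set $f=[x_1^a,x_2^0,x_2^0]+[x_1^a,x_2^0]$ with $a$ odd. Over $K=\mathbb{F}_2$ every admissible substitution has the form $x_1^a\mapsto\alpha e_a$, $x_2^0\mapsto\beta e_0$, and $f$ evaluates to $\alpha(\beta^2+\beta)e_a=0$; so $f\in T_\mathbb{Z}(U_1)$. But $f\notin I$: take the Heisenberg algebra $H$ with basis $y,z,w$, where $[z,y]=w$ is central, graded by $H_0=Ky$, $H_a=Kz+Kw$ and $H_k=0$ otherwise. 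Then $[H_g,H_h]=0$ whenever $g\equiv h\pmod 2$, so $H$ satisfies every identity in $I$, while $f(z,y)=[w,y]+w=w\neq0$. Hence $T_\mathbb{Z}(U_1)\supsetneq I$ over $\mathbb{F}_2$, and $x_1^a(\mathrm{ad}\,x_2^0)^{q}+x_1^a(\mathrm{ad}\,x_2^0)$ gives the same conclusion over $\mathbb{F}_q$. This is precisely where the paper's own finite-field argument breaks: in the proof of Lemma~\ref{homogeneo} the displayed computation replaces $(\mathrm{ad}\,l_j)^{r_j}$ by $(\mathrm{ad}\,e_{a_j})^{r_j}$, tacitly assuming that the substitution witnessing $f'\notin T_\mathbb{Z}(U_1)$ has all coordinates nonzero; for the polynomial $f'=[x_1^a,x_2^0]+x_1^a$ arising from the $f$ above, the only substitutions with $f'\neq0$ have the degree-zero coordinate equal to $0$, and no contradiction is obtained. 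So your proof is correct exactly on the domain where the theorem is correct (infinite $K$), and the ``delicate point'' you isolated is a genuine error in the paper's treatment of finite fields, not merely an obstacle to your approach.
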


We deduce, for the graded identities of $W_1$, the following theorem.

\begin{theorem}\label{ideW1}
	Let $K$ be a field of characteristic two. The $\mathbb{Z}$-graded identities
	\[
	x^c\equiv 0\quad(c\leq -2), \qquad [x_1^a,x_2^b] \equiv 0,
	\]
	where $a$ and $b$ are integers greater than $-2$, and of the same parity, form a basis for the $\mathbb{Z}$-graded identities of the Lie algebra $W_1$ over $K$.
\end{theorem}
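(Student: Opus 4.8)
The plan is to \emph{deduce} Theorem~\ref{ideW1} from Theorem~\ref{mainresult1}, using that $W_1$ embeds into $U_1$ as a graded subalgebra. Under the identification $e_n \mapsto e_n$ one has $W_1 = \bigoplus_{n\geq -1} L_n \subseteq \bigoplus_{n\in\mathbb{Z}} L_n = U_1$; crucially, the homogeneous components of $W_1$ and $U_1$ agree in every degree $\geq -1$ (both equal $Ke_n$), while those of $W_1$ vanish in all degrees $\leq -2$. Write $I$ for the $T_\mathbb{Z}$-ideal generated by the proposed basis, i.e. by the monomials $x^c$ with $c\leq -2$ together with the commutators $[x_1^a,x_2^b]$ with $a\equiv b\pmod 2$ and $a,b\geq -1$. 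The inclusion $I\subseteq T_\mathbb{Z}(W_1)$ is the easy half: a variable $x^c$ with $c\leq -2$ is sent into the zero component $L_c$ of $W_1$, while substituting $x_1^a\mapsto \lambda e_a$, $x_2^b\mapsto \mu e_b$ into the commutator gives $\lambda\mu(b-a)e_{a+b}$ by \eqref{multiwitt}, which vanishes because $b-a$ is even and $\mathrm{char}\,K=2$.

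For the reverse inclusion I would first record that $T_\mathbb{Z}(U_1)\subseteq I$. By Theorem~\ref{mainresult1} the ideal $T_\mathbb{Z}(U_1)$ is generated by the brackets $[x_1^a,x_2^b]$ with $a\equiv b\pmod 2$ over \emph{all} integers $a,b$, so it suffices to place each such generator in $I$. If $a,b\geq -1$ it is one of the defining generators of $I$; if instead $a\leq -2$ (or symmetrically $b\leq -2$), then $x_1^a$ is a generator of $I$ and hence $[x_1^a,x_2^b]\in I$ since $I$ is an ideal. Next I would reduce an arbitrary $f$ modulo $I$ to a polynomial using only variables of degree $\geq -1$. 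Expanding $f$ as a combination of left-normed commutators, it is enough to observe that any left-normed commutator $[y_1,\ldots,y_n]$ having a factor $y_k=x^c$ with $c\leq -2$ already lies in $I$: the truncation $[y_1,\ldots,y_k]=[[y_1,\ldots,y_{k-1}],y_k]$ lies in $I$ because $y_k$ does (using antisymmetry when $k=1$), and the whole commutator is obtained from it by further bracketing. Thus every $f$ is congruent modulo $I$ to some $f'$ all of whose variables have degree $\geq -1$.

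The bridge back to $U_1$ is the observation that for a polynomial $f'$ in variables of degree $\geq -1$ every admissible substitution takes values in the subalgebra $W_1$, so the evaluation is literally the same element of $U_1$ --- automatically lying in $W_1$ --- whether performed in $W_1$ or in $U_1$; hence $f'\in T_\mathbb{Z}(W_1)$ if and only if $f'\in T_\mathbb{Z}(U_1)$. Assembling the pieces: given $f\in T_\mathbb{Z}(W_1)$, reduce it to $f'$ modulo $I$; since $I\subseteq T_\mathbb{Z}(W_1)$ we still have $f'\in T_\mathbb{Z}(W_1)$, whence $f'\in T_\mathbb{Z}(U_1)\subseteq I$, and therefore $f\in I$. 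This gives $T_\mathbb{Z}(W_1)=I$, as claimed. I do not anticipate a genuine obstacle once Theorem~\ref{mainresult1} is available: the only points requiring care are the bookkeeping in the reduction past the degree $\leq -2$ variables and the verification that passing between $W_1$ and $U_1$ is harmless precisely because their gradings coincide above degree $-2$. It is worth stressing that this deduction makes no assumption on the cardinality of $K$; all difficulties sensitive to whether $K$ is finite or infinite are already absorbed into Theorem~\ref{mainresult1}.
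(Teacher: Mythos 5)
Your proposal is correct, and it takes a genuinely different route from the paper. The paper does not actually write out a proof of Theorem~\ref{ideW1}: after proving Theorem~\ref{mainresult1} it simply remarks that ``the above results are easily adaptable to the case of the Lie algebra $W_1$,'' i.e.\ its implicit proof is to re-run the whole machinery (the monomial-identity proposition, the reduction lemma, the passage to multihomogeneous and then multilinear identities, and the final spanning argument) inside $W_1$, keeping track of the extra identities $x^c\equiv 0$. You instead deduce the $W_1$ theorem formally from the $U_1$ theorem, and all the steps check out: the inclusion $T_\mathbb{Z}(U_1)\subseteq I$ holds because every generator $[x_1^a,x_2^b]$ with $\min(a,b)\leq -2$ is absorbed by the ideal generated by the corresponding variable $x^c$, $c\leq -2$; the reduction of an arbitrary element modulo $I$ to a combination of left-normed commutators in variables of degree $\geq -1$ is sound since $I$ is an ideal; and the bridge — that a polynomial in variables of degree $\geq -1$ is a graded identity of $W_1$ if and only if it is one of $U_1$ — is exactly right, because $(W_1)_n=(U_1)_n=Ke_n$ for $n\geq -1$, so the admissible substitutions and their evaluations literally coincide. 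What your approach buys is modularity and transparency: no argument sensitive to the cardinality of $K$ or to the combinatorics of commutators needs to be revisited, and the precise reason the $W_1$ case follows (the two gradings agree above degree $-2$) is made explicit rather than hidden in the word ``adaptable.'' What the paper's (sketched) approach buys is a self-contained treatment of $W_1$ that parallels the earlier literature (\cite{FKK}, \cite{CP}). One small remark: your argument establishes the basis property, which is all that Theorem~\ref{ideW1} asserts; minimality of this basis is handled separately in the paper's Section~4, so you were right not to address it.
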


\section{$\mathbb{Z}$-Graded identities of $U_1$} \label{identitiesinfinite}

Here we prove Theorem \ref{mainresult1}. To this end, we need a series of results.

\begin{lemma}\label{3.6}
Over a field of characteristic two, the graded identities \eqref{Cbasis1} hold for $U_1$.
\end{lemma}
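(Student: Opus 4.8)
The plan is to verify directly that the polynomial $[x_1^a, x_2^b]$ vanishes under every admissible substitution when $a \equiv b \pmod 2$. Since the grading on $U_1$ has one-dimensional homogeneous components, with $L_a$ spanned by $e_a$ and $L_b$ spanned by $e_b$, an admissible substitution sends $x_1^a \mapsto \lambda e_a$ and $x_2^b \mapsto \mu e_b$ for some scalars $\lambda, \mu \in K$. By bilinearity of the bracket, it suffices to check that $[e_a, e_b] = 0$ in $U_1$, since then $[\lambda e_a, \mu e_b] = \lambda\mu\,[e_a,e_b] = 0$.

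First I would invoke the multiplication rule \eqref{multiwitt}, namely $[e_i, e_j] = (j-i)e_{i+j}$, which defines the Lie structure on $U_1$. Applying this with $i = a$ and $j = b$ gives $[e_a, e_b] = (b-a)e_{a+b}$. The key observation is that the scalar coefficient is $b - a$, and by hypothesis $a \equiv b \pmod 2$, so $b - a$ is an even integer. Since $K$ has characteristic two, every even integer is zero in $K$, hence $b - a = 0$ as an element of $K$ and therefore $(b-a)e_{a+b} = 0$.

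There is essentially no obstacle here; this is a routine verification rather than a substantive argument. The only point requiring a moment's care is to make sure the reduction to basis elements is justified, which follows immediately from the one-dimensionality of each graded component together with the bilinearity of the bracket: any homogeneous element of degree $a$ is a scalar multiple of $e_a$, and likewise for degree $b$. Thus $[x_1^a, x_2^b]$ evaluates to zero on all admissible substitutions precisely because the structure constant $b-a$ lies in the characteristic. I would conclude that \eqref{Cbasis1} is indeed a graded identity for $U_1$, completing the proof of the lemma.
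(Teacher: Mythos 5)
Your proof is correct and is exactly the argument the paper intends: the paper's proof is the one-line remark that the identity is immediate from the multiplication rule \eqref{multiwitt}, and your write-up simply makes explicit the (routine) reduction to basis elements and the observation that $b-a$ is even, hence zero in characteristic two.
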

\begin{proof}
	The proof is immediate by the multiplication rules  \eqref{multiwitt}.
\end{proof}

In analogy with the associative case we will call \textsl{monomials} the left normed commutators in the free graded Lie algebra.

\begin{proposition}\label{monident}
	Over a field of characteristic two, every graded monomial identity of $U_1$ is consequence of the identities \eqref{Cbasis1}.
\end{proposition}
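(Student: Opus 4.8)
The plan is to first reduce any monomial to a single structure constant, then read off a parity criterion for it to be an identity, and finally use that criterion to exhibit the monomial as a consequence of \eqref{Cbasis1}. Write $S$ for the set of identities \eqref{Cbasis1}, so that $\langle S\rangle_\mathbb{Z}$ is the $T_\mathbb{Z}$-ideal they generate. Let $m=[x_{i_1}^{a_1},x_{i_2}^{a_2},\dots,x_{i_n}^{a_n}]$ be a left-normed monomial, and put $s_j=a_1+\cdots+a_j$ for the partial sums of its degrees. Since each component $L_a$ of $U_1$ is the one-dimensional span of $e_a$, an admissible substitution replaces $x_{i_k}^{a_k}$ by $c_k e_{a_k}$ for scalars $c_k\in K$, and an induction on $n$ using \eqref{multiwitt} gives
\begin{equation*}
[e_{a_1},e_{a_2},\dots,e_{a_n}]=\Big(\prod_{k=2}^{n}(a_k-s_{k-1})\Big)e_{s_n}.
\end{equation*}
As $e_{s_n}\neq 0$ in $U_1$ and the $c_k$ are arbitrary, the monomial $m$ is a graded identity of $U_1$ precisely when the integer $\prod_{k=2}^{n}(a_k-s_{k-1})$ vanishes in $K$; in characteristic two this happens exactly when some factor $a_k-s_{k-1}$ is even. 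Since $a_k-s_{k-1}\equiv s_k\pmod 2$, the criterion reads: $m$ is an identity of $U_1$ if and only if $s_k\equiv 0\pmod 2$ for some $k\in\{2,\dots,n\}$.

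Assume now that $m$ is an identity and fix such a $k$, so that $s_{k-1}$ and $a_k$ have the same parity. The reduction to $S$ is then formal. First I would consider the generator $[y^{s_{k-1}},x_{i_k}^{a_k}]\in S$, which is an instance of \eqref{Cbasis1} precisely because $s_{k-1}\equiv a_k\pmod 2$. Applying the graded endomorphism of $L\langle X_\mathbb{Z}\rangle$ that sends the variable $y^{s_{k-1}}$ to the homogeneous element $[x_{i_1}^{a_1},\dots,x_{i_{k-1}}^{a_{k-1}}]$ (whose $\mathbb{Z}$-degree is indeed $s_{k-1}$) and fixes $x_{i_k}^{a_k}$, I obtain
\begin{equation*}
[x_{i_1}^{a_1},\dots,x_{i_{k-1}}^{a_{k-1}},x_{i_k}^{a_k}]\in\langle S\rangle_\mathbb{Z}.
\end{equation*}
Finally, because a $T_\mathbb{Z}$-ideal is closed under bracketing with arbitrary elements, commuting this on the right successively with $x_{i_{k+1}}^{a_{k+1}},\dots,x_{i_n}^{a_n}$ yields $m\in\langle S\rangle_\mathbb{Z}$, which is what we want.

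The computation is elementary, so I do not anticipate a genuine obstacle. The two points deserving care are the inductive evaluation of the long commutator, where one must track how the base degree shifts from $s_{k-1}$ to $s_k$ at each step, and the passage from ``$\prod_k(a_k-s_{k-1})$ vanishes in $K$'' to the clean parity statement on the partial sums $s_k$. I note finally that the argument depends only on the sequence of degrees $(a_1,\dots,a_n)$ and never on the variable labels, so repeated variables require no separate treatment.
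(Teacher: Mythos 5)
Your proof is correct and follows essentially the same route as the paper's: both rest on the fact that every evaluation of an initial segment of the monomial is a scalar multiple of a single basis element, reduce the vanishing question to the parity criterion for a length-two commutator, and then exhibit the monomial as a consequence of \eqref{Cbasis1} by substituting the homogeneous initial segment for one variable of a generator and bracketing with the remaining variables. The only difference is presentational: the paper organizes this as an induction on the length of the monomial, whereas you unroll that induction into the closed-form structure constant $\prod_{k=2}^{n}(a_k-s_{k-1})$ and the explicit criterion that some partial sum $s_k$ ($k\ge 2$) be even.
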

\begin{proof}
	We denote by $I$ the $T_G$-ideal generated by the polynomials \eqref{Cbasis1}. We prove the claim by induction on the length $n$ of the monomial. The result is obvious for $n=1$ and $n=2$, so we suppose $n\geq 3$. Let $M^{\prime}=[x_{1}^{a_1},\ldots, x_{n-1}^{a_{n-1}}]$. If $M^{\prime}\in T_\mathbb{Z}(U_1)$ then it lies in $I$, by the induction hypothesis, and hence $M\in I$. We assume now that $M^{\prime}\notin T_\mathbb{Z}(U_1)$. Let $a=\| M^\prime\|$, then the result of every admissible substitution in $M^{\prime}$ is a scalar multiple of $e_{a}$. 
	Therefore, $M\in T_\mathbb{Z}(U_1)$ if and only if $[x_{1}^{a},x_{2}^{a_n}]\in T_\mathbb{Z}(U_1)$. The commutator $[x_{1}^{a},x_{2}^{a_n}]$ lies in $T_\mathbb{Z}(U_1)$ if and only if $a$ and $a_n$ have the same parity, and hence $M$ lies in $I$. Thus in all cases $M\in I$, as required.
\end{proof}

It is well known that every $T_\mathbb{Z}$-ideal is generated by its regular polynomials. Recall that a polynomial $f\in L\langle X_\mathbb{Z}\rangle$ is regular if every one of its variables appears in every monomial of $f$, not necessarily with the same degree. 

Recall that $I$ is the $T_\mathbb{Z}$-ideal generated by the polynomials in \eqref{Cbasis1}. The next lemma is a key step in the proof of our main theorem. 

\begin{lemma}\label{1111}
	Let $M=[x_{i_0}^{a_0},x_{i_1}^{a_1},\ldots, x_{i_n}^{a_n}]$ be a monomial in $L\langle X_\mathbb{Z} \rangle$. If $M\notin T_\mathbb{Z}(U_1)$ then $M$, modulo $I$, can be written in such a way that $a_0$ is odd, $a_i$ is even for each $i=1$, \ldots, $n$, and $a_1\leq a_2\leq \ldots\leq a_n$.
\end{lemma}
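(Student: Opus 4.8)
The plan is to read off the parities of the exponents from the assumption that $M$ is a non-identity, and then to rearrange the entries using only the antisymmetry of the bracket and the Jacobi identity, checking that every correction term that appears is absorbed by $I$.

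First I would determine the parities forced by the hypothesis. Every admissible substitution sends $x_{i_j}^{a_j}$ to a scalar multiple of $e_{a_j}$, so by the multiplication rule \eqref{multiwitt} the image of $M$ is a scalar multiple of $\prod_{k=1}^{n}\bigl(a_k-(a_0+\cdots+a_{k-1})\bigr)\,e_{a_0+\cdots+a_n}$. Hence $M\notin T_\mathbb{Z}(U_1)$ forces each factor $a_k-(a_0+\cdots+a_{k-1})$ to be odd, i.e.\ nonzero in characteristic two. Writing $s_k=a_0+\cdots+a_{k-1}$ and reducing modulo $2$, the condition $a_1\not\equiv s_1=a_0$ shows that $a_0$ and $a_1$ have opposite parity, so $s_2=a_0+a_1$ is odd; an immediate induction then gives that $s_k$ is odd for every $k\ge 2$, whence $a_k\not\equiv s_k$ forces $a_k$ to be even for all $k\ge 2$. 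Thus exactly one of $a_0,a_1$ is odd and $a_2,\dots,a_n$ are all even.

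Next I would move the odd exponent into the first slot. If $a_0$ is already odd there is nothing to do; otherwise $a_1$ is the odd one, and since $\operatorname{char}K=2$ the antisymmetry $[u,v]=-[v,u]=[v,u]$ lets me replace the innermost bracket $[x_{i_0}^{a_0},x_{i_1}^{a_1}]$ by $[x_{i_1}^{a_1},x_{i_0}^{a_0}]$ with no correction term. After this exact rewriting the entry in position $0$ is odd and every entry in positions $1,\dots,n$ is even. Finally I would sort these (now all even) entries by bubble sort, using at each step the Jacobi-type identity $[P,z_i,z_{i+1}]=[P,z_{i+1},z_i]+[P,[z_i,z_{i+1}]]$, where $P$ denotes the left-normed prefix. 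Since any two of the permuted entries have even degree, their bracket $[z_i,z_{i+1}]$ is an instance of \eqref{Cbasis1} and so lies in $I$; because $I$ is an ideal of $L\langle X_\mathbb{Z}\rangle$, the correction $[P,[z_i,z_{i+1}],z_{i+2},\dots,z_n]$ lies in $I$ too. Thus each adjacent transposition of the even entries changes $M$ only by an element of $I$, and after finitely many swaps one reaches a monomial of the required shape.

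The main obstacle is the bookkeeping in the last step: one must be certain that the corrections produced by the Jacobi identity are genuinely swallowed by $I$. This is exactly why the preliminary parity analysis is indispensable, for it guarantees that all of the entries being permuted are even; were one of the sorted slots to carry an odd degree, the corresponding bracket would not be an instance of \eqref{Cbasis1} and the swap could not be performed modulo $I$.
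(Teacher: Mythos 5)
Your proof is correct, and its overall shape --- determine the parities forced by non-vanishing, move the odd variable to the front using characteristic-two antisymmetry, then sort the even variables by Jacobi swaps whose correction terms fall into $I$ --- matches the paper's. The one genuine difference is how the parity constraint is derived. The paper argues by contradiction: it assumes two odd exponents $a_i,a_j$ occur ($i<j$), reduces via initial segments of the commutator and a swap of the first two variables to the case $i=0$, $j=n$ with all intermediate exponents even, and then notes that the prefix $[x_{i_0}^{a_0},\ldots,x_{i_{n-1}}^{a_{n-1}}]$ would have odd $\mathbb{Z}$-degree, so bracketing it with the odd-degree variable $x_{i_n}^{a_n}$ annihilates $M$ by \eqref{multiwitt}, contradicting $M\notin T_\mathbb{Z}(U_1)$. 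You instead evaluate $M$ in closed form under an admissible substitution, getting the scalar $\prod_{k=1}^{n}\bigl(a_k-s_k\bigr)$ with $s_k=a_0+\cdots+a_{k-1}$, and read off the parities by a short induction; this is more self-contained and gives in one stroke that exactly one of $a_0,a_1$ is odd and all later exponents are even. You are also more careful than the paper on the sorting step, which the paper compresses into a single sentence (``apply \eqref{Cbasis1} together with the Jacobi identity''): your explicit rewriting $[P,z_i,z_{i+1}]=[P,z_{i+1},z_i]+[P,[z_i,z_{i+1}]]$, combined with the observation that $[z_i,z_{i+1}]$ is a substitution instance of \eqref{Cbasis1} and that $I$ is an ideal, makes the absorption of all correction terms into $I$ airtight.
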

\begin{proof}
Since $M\notin T_\mathbb{Z}(U_1)$ either $a_0$ or $a_1$ is an odd integer. 
Suppose there exist $i$ and $j$, with $0\leq i< j\leq n$, such that $ a_i $ and $ a_j $ are odd integers. 
By exchanging the first two variables, if necessary, we can assume that $i = 0$ and $j = n$, and all other $a_k$ are even integers. (If some additional $a_k$ is odd we simply consider the initial part of the commutator, from $a_0$ to $a_k$.)  Then $M^\prime=[x_{i_0}^{a_0}, x_{i_1}^{a_1}, \ldots, x_{i_{n-1}}^{a_{n-1}}]$ is a monomial in $L\langle X_\mathbb{Z} \rangle$ that is not a graded identity. This implies $\sum_{i=0}^{n-1}a_i$ is odd, that is $M\in T_\mathbb{Z}(U_1)$ which is absurd. Now we apply identity \eqref{Cbasis1}, together with the Jacobi identity,
several times on the variables of even $\mathbb{Z}$-degree.
\end{proof}

As mentioned earlier, we cannot claim in general that a $T_\mathbb{Z}$-ideal is generated by its multihomogeneous elements; this is certainly true if the base field is infinite. But, in our specific case, we are in a position to deduce this fact.

\begin{lemma}\label{homogeneo}
Over a field of characteristic two, the $T_\mathbb{Z}$-ideal $T_\mathbb{Z}(U_1)$ is generated by its multihomogeneous polynomials.
\end{lemma}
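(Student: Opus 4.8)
The plan is to prove, exactly as the lemma states, that $T_\mathbb{Z}(U_1)$ is generated as a $T_\mathbb{Z}$-ideal by the multihomogeneous polynomials it contains. Concretely, I would take an arbitrary $f\in T_\mathbb{Z}(U_1)$ in the variables $x_1^{a_1},\dots,x_n^{a_n}$, write $f=\sum_{\mathbf d}f_{\mathbf d}$ for its decomposition into multihomogeneous components of multidegree $\mathbf d=(d_1,\dots,d_n)$, and show that $f$ is congruent, modulo the $T_\mathbb{Z}$-ideal generated by multihomogeneous identities, to a multihomogeneous identity. If $K$ is infinite this reduces to the classical Vandermonde argument: since $x_j^{a_j}\mapsto\mu_j x_j^{a_j}$ is a graded endomorphism, $f(\mu_1u_1,\dots,\mu_nu_n)=\sum_{\mathbf d}(\prod_j\mu_j^{d_j})\,f_{\mathbf d}(u_1,\dots,u_n)=0$ for all scalars $\mu_j$, and interpolation forces each $f_{\mathbf d}$ to be an identity. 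The point of the lemma is to reach a workable conclusion for an arbitrary, possibly finite, field, and for this I would exploit the two special features of $U_1$: the grading is by $\mathbb{Z}$, and every component $L_i$ is one dimensional.

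The first reduction is completely field-independent and uses one-dimensionality. For an admissible substitution $x_j^{a_j}\mapsto u_j\in L_{a_j}$, a monomial of multidegree $\mathbf d$ evaluates to a scalar multiple of $e_{N(\mathbf d)}$ with $N(\mathbf d)=\sum_j d_j a_j$, so $f_{\mathbf d}(u_1,\dots,u_n)\in L_{N(\mathbf d)}$. Grouping the equality $f(u)=0$ along the linearly independent vectors $e_N$ shows that $f^{(N)}:=\sum_{N(\mathbf d)=N}f_{\mathbf d}$ is again an identity for each $N$; hence I may assume $f$ is $\mathbb{Z}$-homogeneous, all of its monomials sharing one output degree $N_0$, and the task becomes separating the multidegrees $\mathbf d$ with $\sum_j d_j a_j=N_0$. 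Here the grading genuinely helps for variables of nonzero degree, since raising the degree in a variable $x_j^{a_j}$ with $a_j\neq0$ shifts $N$ by the nonzero amount $a_j$, so such degrees are detected by the output grading; Lemma~\ref{1111} is convenient at this stage, as modulo $I$ it lets me assume a single variable of odd (hence nonzero) degree heading each monomial, with the rest of even degree.

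The genuine obstacle is the behaviour of the variables of degree $0$: they contribute nothing to $N$, so the output grading is blind to their multiplicity, and over a finite field the scaling $x_j^{0}\mapsto\mu e_0$ sees $\mu$ only through $\mu^{d}$, which collapses (already $\mu^2=\mu$ over $\mathbb{F}_2$). The structural feature I would invoke is the idempotency available in characteristic two: by \eqref{multiwitt}, $\mathrm{ad}(e_0)$ acts on $e_j$ as multiplication by $j$, hence by $j\bmod 2$, so $\mathrm{ad}(e_0)^2=\mathrm{ad}(e_0)$ on every homogeneous component; moreover $L_0$ is one dimensional, so all degree-$0$ variables collapse to the single operator $\mathrm{ad}(e_0)$. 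Using this I would try to normalize, modulo $I$ and up to consequences of multihomogeneous identities, the degree in each degree-$0$ variable on the components where it acts nontrivially, thereby replacing $f$ by a genuinely multihomogeneous identity. I expect this degree-$0$ analysis over finite fields to be the crux of the whole argument and the place where the hypothesis on $K$ is really tested; the nonzero-degree variables and the infinite-field case are comparatively routine.
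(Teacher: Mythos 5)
Your proposal is not a complete proof, and one of its explicit claims is false. The false step: for variables of nonzero degree it is \emph{not} true that ``such degrees are detected by the output grading''. The output degree $N(\mathbf{d})=\sum_j d_ja_j$ does not separate multidegrees: with $a_1=2$, $a_2=4$ the multidegrees $(3,1)$ and $(1,2)$ both give $N=10$, and over $\mathbb{F}_q$ the scaling substitutions see each $d_j$ only modulo $q-1$, so multidegrees with equal $N$ and componentwise congruent mod $q-1$ (these exist in abundance with all $a_j\neq 0$) escape both of your tools. Thus the part you set aside as ``comparatively routine'' contains the whole difficulty, not just the degree-$0$ variables. The second gap is that the step you yourself call the crux --- normalizing degree-$0$ variables via $\mathrm{ad}(e_0)^2=\mathrm{ad}(e_0)$ --- is announced (``I would try to normalize\dots'') but never carried out. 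The paper's proof is quite different and treats all even degrees uniformly: it reduces to a regular identity $f$ headed by a single odd variable (Lemma \ref{1111}), writes $f=\sum_i\alpha_i\,x_0(\mathrm{ad}\,x_1)^{m_i^1}\cdots(\mathrm{ad}\,x_n)^{m_i^n}$, factors out $(\mathrm{ad}\,x_k)^{r_k}$ with $r_k=\min_i m_i^k$ (legitimate modulo $I$, since the $\mathrm{ad}$'s of even variables commute modulo \eqref{Cbasis1}), so that $f\equiv f'(\mathrm{ad}\,x_1)^{r_1}\cdots(\mathrm{ad}\,x_n)^{r_n}\pmod{I}$ with $\deg f'<\deg f$, and then argues $f'$ is again an identity because each $\mathrm{ad}\,e_{a_k}$ with $a_k$ even shifts the odd components of $U_1$ with coefficient $1$ in characteristic $2$; induction on degree finishes.

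That said, your instinct that the finite-field case is ``where the hypothesis on $K$ is really tested'' is sounder than either your plan or the paper's proof: over a finite field the statement itself fails, so no completion of these arguments is possible. Take $K=\mathbb{F}_2$, let $y$ have degree $1$ and $z$ degree $0$, and set
\[
g=[y,z,z]+[y,z].
\]
The only admissible substitutions are $y\mapsto\alpha e_1$, $z\mapsto\beta e_0$ with $\alpha,\beta\in\{0,1\}$; since $[e_1,e_0]=-e_1=e_1$, we get $g\mapsto\alpha\beta(\beta+1)e_1=0$, so $g\in T_\mathbb{Z}(U_1)$. On the other hand, the paper's own Proposition \ref{monident} and Lemma \ref{1111} show that \emph{every} multihomogeneous identity of $U_1$ in characteristic $2$ lies in $I$: if it has at least two odd entries, all its monomials are monomial identities, hence in $I$; if it has exactly one odd entry, it is congruent modulo $I$ to $\alpha M$ for a sorted monomial $M$ whose evaluation at basis vectors is $e_N\neq 0$, forcing $\alpha=0$. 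Hence the $T_\mathbb{Z}$-ideal generated by all multihomogeneous identities is exactly $I$. But $I$ is spanned by multihomogeneous elements (its generators are multilinear, so $I$ is the span of the multihomogeneous elements $[[u,v],w_1,\dots,w_k]$ with $u,v,w_i$ monomials, $\|u\|\equiv\|v\|\pmod 2$), so $g\in I$ would force its component $[y,z]$ into $I\subseteq T_\mathbb{Z}(U_1)$ --- yet $[y,z]$ is not an identity. So $g$ is an identity that is not a consequence of multihomogeneous identities; the lemma fails over $\mathbb{F}_2$, and the variant $y(\mathrm{ad}\,z)^q+y(\mathrm{ad}\,z)$ fails over $\mathbb{F}_q$ for any $q=2^k$. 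The same example pinpoints the flaw in the paper's proof: in the displayed equation $0=f(l)=f'(l)(\mathrm{ad}\,e_{a_1})^{r_1}\cdots(\mathrm{ad}\,e_{a_n})^{r_n}$ the witness for $f'\notin T_\mathbb{Z}(U_1)$ is tacitly evaluated at the nonzero basis vectors $e_{a_j}$, whereas over a finite field every witness may require some $l_j=0$ (for $g$ above, $f'=[y,z]+y$ is nonzero only when $z\mapsto 0$). In short: for infinite $K$ your Vandermonde reduction alone already proves the lemma and the rest of your machinery is unnecessary; for finite $K$ neither your approach nor the paper's can be repaired, because the statement is false there.
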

\begin{proof}
The result is well known if $K$ is infinite. So we assume $K$ is a finite field. Let $f=f(x_0,x_1,\ldots,x_n)$ be a regular polynomial in $T_\mathbb{Z}(U_1)$. By Lemma \ref{1111}, only one of the variables in $f$ is in an odd component. Suppose $x_0$ in an odd component and the remaining variables are in even components. Moreover, $f$ is linear in $x_0$, and $x_0$ appears in the first position on each monomial in $f$. We  induct on the degree of the polynomial $f$. Suppose that $f$ is not multihomogeneous. As $f$ is regular, we write $f=\sum_i\alpha_iM_i$ where $M_i$ is the commutator 
\[
[x_0,x_1,\cdots,x_1,x_2,\ldots,x_{k-1},\underbrace{x_{k},\ldots,x_{k}}_{m^k_i\mbox{ times}},x_{k+1},\ldots,x_n] = x_0 (ad\,x_1)^{m_i^1}\cdots (ad\,x_n)^{m_i^n}.
\] 
Here $m^k_i$ are non-negative integers depending on the variable $x_i$ and on the monomial $M_i$, and $ad\,x$ is, as usual, the linear transformation $y\, ad\,x \mapsto [y,x]$ in a Lie algebra. For each variable $x_k$ we put $r_k=\min\{m_{i}^k\}$.
By the regularity of $f$ we have $r_k>0$. Applying  identity \eqref{Cbasis1}, if needed, there exists an element $f'$ which is a sum of regular polynomials:
$f^\prime=f^\prime(x_0,x_1,\ldots,x_n)$ in $T_\mathbb{Z}(U_1)$, not necessarily having the same variables as $f$, such that
\[
f\equiv f^\prime (ad\, x_1)^{r_1} (ad\, x_2)^{r_2} \cdots (ad\, x_n)^{r_n} \pmod{I}.
\]
	Note that the degree of the polynomial $f^\prime$ is lower than the degree of $f$. Since $I\subseteq T_{\mathbb{Z}}(U_1)$, by induction the result follows if $f^\prime\in T_\mathbb{Z}(U_1)$. Suppose that $f^\prime\notin T_\mathbb{Z}(U_1)$. Then there exists an admissible substitution for the polynomial $f$, say $(l_0,l_1,\ldots, l_n)$, of elements in $U_1$ such that  $f^\prime(l_0,l_1,\ldots,l_n)\neq 0$, but $f(l_0,l_1,\ldots,l_n)=0$. Recall that $U_1=\oplus_{i\in\mathbb{Z}}L_i$ is  the natural $\mathbb{Z}$-grading on $U_1$. Define $A_0$ as the sum of all even components $L_i$, $i\equiv 0\pmod{2}$, and $A_1$ as the sum of the odd ones. Of course, $f^\prime(l_0,l_1,\dots,l_n)\in A_1$, that is 
\[
f^\prime(l_0,l_1,\dots,l_n)=\sum_{i\in\mathbb{Z}}\lambda_ie_{2i+1}
\]
where the set $\{i\mid \lambda_i\neq 0\}$ is finite. As each homogeneous component of $U_1$ is one-dimensional, we have 
\[
0=f(l_0,l_1,\dots,l_n)= f^\prime(l_0,l_1,\dots,l_n) (ad\, e_{a_1})^{r_1} \cdots (ad\, e_{a_n})^{r_n} =\sum_{i\in \mathbb{Z}}\alpha_i \lambda_i e_{(2i+1)+t}.
\]
Here $t=\sum_{j=1}^{n}r_ja_j$. It is clear that each $\alpha_i$ equals 1, this implies that each $\lambda_i=0$, which contradicts the fact that the substitution in $f^\prime$ is not zero. Therefore the graded identity $f$ is equivalent to a multihomogeneous graded identity.
\end{proof}

The corollary to the above lemma is the key for the proof of our main theorem. When $K$ is an infinite field, its proof can be found in \cite[Theorem 4.4]{CP}. Formally in \cite{CP} it was required that $K$ an infinite field of characteristic different from 2, but the proof in characteristic 2, even for finite fields, remains essentially the same. 

\begin{corollary}\label{identimult}
	Let $K$ be a field of characteristic two. The $T_\mathbb{Z}$-ideal $T_\mathbb{Z}(U_1)$ is generated by its multilinear 
polynomials.
\end{corollary}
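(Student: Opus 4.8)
The plan is to combine Lemma~\ref{homogeneo} with the structural features already exploited in its proof. By Lemma~\ref{homogeneo} it suffices to treat a multihomogeneous identity $f\in T_\mathbb{Z}(U_1)$. Using Lemma~\ref{1111} I would first normalize $f$: working modulo $I$ it may be assumed that $f$ is linear in a single variable $x_0$ of odd degree $a_0$, that this variable occupies the first position of every monomial, and that the remaining variables $x_1,\dots,x_n$ have even degrees $a_1,\dots,a_n$ and appear with fixed multiplicities $d_1,\dots,d_n$. Since the defining identities \eqref{Cbasis1} are multilinear and generate $I$, it is enough to exhibit $f$ as a consequence of the multilinear identities of $U_1$; I will in fact obtain the stronger conclusion $f\in I$, which already entails the claim.

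The key observation is that, modulo $I$, there is essentially one surviving monomial in each multidegree. Indeed, anticommutativity (which in characteristic two reads $[a,b]=[b,a]$) lets me move the odd variable $x_0$ to the front, while the Jacobi identity together with \eqref{Cbasis1} shows that two adjacent even variables may be transposed at the cost of a bracket $[x_i,x_j]$ of two even variables, an element of $I$. Hence every monomial of the given multidegree that is not itself a graded identity reduces modulo $I$ to the single standard monomial
\[
M_0=\Bigl[x_0,\underbrace{x_1,\dots,x_1}_{d_1},\dots,\underbrace{x_n,\dots,x_n}_{d_n}\Bigr],
\]
whereas every monomial that is a graded identity already lies in $I$ by Proposition~\ref{monident}. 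Collecting terms, I obtain $f\equiv\beta M_0\pmod{I}$ for a single scalar $\beta\in K$.

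It then remains to force $\beta=0$, and this is where the characteristic-two geometry of $U_1$ enters, exactly as in Lemma~\ref{homogeneo}. Evaluating $M_0$ on the admissible substitution $x_0\mapsto e_{a_0}$, $x_k\mapsto e_{a_k}$ and applying the multiplication rule \eqref{multiwitt}, each application of an $\mathrm{ad}\,e_{a_k}$ multiplies by a factor $a_k-m$, where $m$ is the current (necessarily odd) degree; since $a_k$ is even this factor is odd, hence equal to $1$ in $K$. Therefore $M_0$ evaluates to $e_{a_0+\sum_k d_ka_k}$ with coefficient $1$, so $M_0\notin T_\mathbb{Z}(U_1)$; as $f$ is an identity and each component of $U_1$ is one-dimensional, $\beta=0$ follows at once. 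Thus $f\in I$, which is generated by the multilinear identities \eqref{Cbasis1}, completing the reduction.

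The step I expect to be the genuine obstacle is the failure of the classical polarization argument in characteristic two. The usual route from a multihomogeneous identity to multilinear ones is to polarize and then identify the new variables, which reintroduces a factor $\prod_k d_k!$; once some $d_k\ge 2$ this factor vanishes, so the complete polarization of $f$ is itself congruent to $0$ modulo $I$ and yields no information. The argument above sidesteps this entirely: rather than recovering $f$ from its polarization, it exploits the one-dimensionality of the homogeneous components together with the fact that in characteristic two every non-trivial evaluation of a standard monomial has coefficient $1$. Over an infinite field the same conclusion is \cite[Theorem~4.4]{CP}, and the plan above is the natural adaptation of it to arbitrary, possibly finite, fields of characteristic two.
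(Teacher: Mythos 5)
Your proof is correct, and there is no circularity (Lemmas~\ref{homogeneo}, \ref{1111} and Proposition~\ref{monident} are all established before the corollary), but it takes a genuinely different route from the paper's. The paper never touches the standard monomial $M_0$ at this stage: given a multihomogeneous identity $f$, it splits the occurrences of each repeated variable into distinct new variables in an \emph{arbitrary} way (no polarization, hence no factorial coefficients), and observes that, because every homogeneous component of $U_1$ is at most one-dimensional, every admissible evaluation of the split polynomial $h$ is a product of scalars $\xi_{i,j}$ times the value of $h$ at the standard substitution $x\mapsto e_a$, which equals $f(e_{a_1},\dots,e_{a_r})=0$; hence $h$ is again a graded identity, it is linear in the new variables, and $f$ is recovered from $h$ by identifying variables. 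Iterating over the variables yields a multilinear identity having $f$ as a consequence. Your argument instead proves the stronger statement that every multihomogeneous identity already lies in $I$: you reduce $f$ modulo $I$ to $\beta M_0$ (via Lemma~\ref{1111}, Proposition~\ref{monident} and the Jacobi identity — note that for a non-identity monomial the odd variable necessarily sits in one of the first two positions, so plain anticommutativity does suffice to bring it to the front), and then kill $\beta$ by the computation that in characteristic two every structure constant $a_k-m$ arising in the evaluation of $M_0$ is odd, hence equal to $1$. What your route buys is strength: combined with Lemma~\ref{homogeneo} it already gives $T_\mathbb{Z}(U_1)=I$, i.e.\ Theorem~\ref{mainresult1}, making the paper's subsequent multilinear analysis redundant. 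What the paper's route buys is economy and generality: it uses nothing about the candidate basis \eqref{Cbasis1} beyond the fact that its elements are multilinear, and the splitting argument works verbatim for any graded algebra whose homogeneous components are at most one-dimensional, the monomial-reduction work being deferred and done only once, in the multilinear setting of the main theorem. Both proofs evade the failure of polarization in characteristic two by the same mechanism — the one-dimensionality of the components — which you correctly single out as the crux.
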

\begin{proof}
	By Lemma~\ref{homogeneo}, we can consider the multihomogeneous graded identities. Let $f=f(x_{1}^{a_1},\ldots, x_{r}^{a_r})$ be a multihomogeneous graded identity of $U_1$. As $\dim_KL_i\leq 1$  for every $i\in \mathbb{Z}$, we have that each admissible substitution $\varphi$ can be taken to map $x_{i}^{a_i}$ to $\xi_ie_{a_i}$ where
	$\xi$'s are commutative and associative (independent) variables over $K$. Here, as above, $e_{a_i}$ spans the vector space  $L_{a_i}$. Hence 
	we have
	\begin{equation}\label{idemult}
	\varphi(f(x_{1}^{a_1},\ldots,x_{r}^{a_r}))=\xi_1^{n_1}\cdots \xi_r^{n_r}f(e_{a_1},\ldots,e_{a_r}).
	\end{equation}
	Fix some $a_i$, $\deg_{x_i^{a_i}}f=n_i\geq 1$. Take new variables $x_{i,j}^{a_i}$ where $1\leq j\leq n_i$, and consider a multihomogeneous graded polynomial $h(x_{1,1}^{a_1},\ldots,x_{1,n_1}^{a_1},x_{2,1}^{a_2},\ldots, x_{r,n_r}^{a_r})$ such that (here we put, in order to simplify the notation, $i=1$)
	\[
	h(\underbrace{x_{1}^{a_1},\ldots,x_{1}^{a_1}}_{n_1},x_{2}^{a_2},\ldots, x_{r}^{a_r})=f(x_{1}^{a_1},\ldots,x_{r}^{a_r}).
	\]
	But $x_{i,j}^{a_i}$ can be evaluated to $e_{a_i}$ for each $1\leq i\leq r$ and $1\leq j\leq n_i$. Then Eq.~\eqref{idemult} implies that $h$ is a graded identity and that $f$ is a consequence of $h$. Moreover $h$ is linear in each of the new $n_i$ variables. Continuing in this way for the remaining variables we prove the statement.
\end{proof}

\begin{remark}
The choice of the polynomial $h$ in the above theorem need not be given by the multilinearisation process. Due to this reason it need not be unique. For example $f = [x^g_1, x^h_2, x^g_1]$ can come from $g = [x^g_1, x^h_2, x^g_3]$. The complete linearisation of $f$ is $[x^g_1, x^h_2, x^g_3] + [x^g_3, x^h_2, x^g_1]$. Clearly in characteristic 2 we cannot return to $f$ starting with the latter polynomial.
\end{remark}

Let $n$ be a positive integer. Given ${\bf g}\in \mathbb{Z}^n$ and a subset $\mathfrak{S}=\{s_1,\dots,s_{n}\}\subset \mathbb{N}$, with $s_1<\ldots<s_{n}$, denote by $P_{\mathfrak{S}}^{\bf g}$ the subspace of $L\langle X_\mathbb{Z} \rangle$ of the multilinear polynomials in the variables $\{x_{s_1}^{ a_1},\ldots, x_{s_{n}}^{a_n}\}$. The elements of $\mathbb{Z}$ and the $n$-tuple ${\bf g}$ may be omitted from the notation if no ambiguity arises, thus we write $P_{\mathfrak{S}}$ and $x_i$ instead of $P_\mathfrak{S}^{\bf g}$ and $x_{i}^{a_i}$, respectively. Let $\sigma$ be a permutation in $S_{n-1}$. Denote by $N_{\sigma}$ the monomial $[x_{s_n},x_{s_{\sigma(1)}},\cdots,x_{s_{\sigma(n-1)}}]$. It is well known that the set $\{N_\sigma \mid \sigma \in S_{n-1}\}$ is a basis for the vector space $P_\mathfrak{S}$.

Now we have all the ingredients for the proof of the main result in the paper.

\begin{proof}[Proof of Theorem \ref{mainresult1}]
	We have $I\subseteq T_\mathbb{Z}(U_1)$. To prove the opposite inclusion, it is enough to show that $P_n^{\bf g}\cap T_\mathbb{Z}(U_1)\subseteq I$ for every $n$ and every ${\bf g}\in \mathbb{Z}^n$. Let $f$ be an element in $P_n^{\bf g}\cap T_\mathbb{Z}(U_1)$. The cases $n=1$ and~2 are trivial. Hence we suppose $n\geq 3$. If $f=f(x_1,\ldots,x_n)$ has two variables in odd components, we have $f\in I$. Therefore, we assume that the variable $x_n$ of $f$ lies in an odd component and the remaining variables are in even components.  Let $f=\sum_{\sigma \in S_{n-1}} \lambda_\sigma N_{\sigma}$, $\lambda_{\sigma}\in {K}$, be a polynomial in $P_n^{\bf g}\cap T_\mathbb{Z}(U_1)$. Lemma~\ref{1111} implies that there exists $\lambda\in K$ such that
	\[
	f\equiv \lambda [x_n,x_1,\ldots,x_{n-1}] \pmod{I}
	\]
where $\|x_1\|<\ldots< \|x_{n-1}\|$. Since $f\in T_\mathbb{Z}(U_1)$, we have: 1) if $[x_n,x_1,\ldots,x_{n-1}]\notin T_\mathbb{Z}(U_1)$ it follows that $\lambda=0$, hence $f\in I$; 2) if $[x_n,x_1,\ldots,x_{n-1}]\in T_\mathbb{Z}(U_1)$ we apply Proposition \ref{monident}.
\end{proof}

The above results are easily adaptable to the case of the Lie algebra $W_1$. Recall that in \cite{FKK} the authors considered the base field of characteristic 0, and this was important in their proofs. In \cite{CP}, the result was extended to $W_1$ considered over an infinite field of characteristic different from $2$.

\section{Independence of graded identities}
In this section we use ideas from \cite{CP,FKK}. As above $K$ is an arbitrary field of characteristic two.

Denote by $f_{r,s}=[x^r_1, x^s_2] \in L\langle X_\mathbb{Z}\rangle$ the graded polynomials from \eqref{Cbasis1}, and assume $r\le s$. 

\begin{lemma}
Suppose $r$ and $s$ are integers of the same parity, $r\equiv s\pmod{2}$. The graded identity $f_{r,s}$ is not a consequence of the identities $f_{u,v}$, $u\le v$, if $(r,s)\neq (u,v)$.
\end{lemma}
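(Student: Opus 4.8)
The plan is to show that the identity $f_{r,s}=[x_1^r,x_2^s]$ cannot be derived from the other identities $f_{u,v}$ (with $u\le v$, $(u,v)\neq(r,s)$, and $u\equiv v\pmod 2$). The standard strategy is to construct a $\mathbb{Z}$-graded Lie algebra $L$ that satisfies every $f_{u,v}$ with $(u,v)\neq (r,s)$ but does \emph{not} satisfy $f_{r,s}$. If such an $L$ exists, then the $T_\mathbb{Z}$-ideal generated by $\{f_{u,v}\mid (u,v)\neq(r,s)\}$ is contained in $T_\mathbb{Z}(L)$, whereas $f_{r,s}\notin T_\mathbb{Z}(L)$; hence $f_{r,s}$ is not a consequence of the remaining identities, which is exactly the claim.

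First I would look for $L$ among the smallest possible graded structures that can separate these identities. Since $f_{r,s}$ being violated requires two homogeneous components in degrees $r$ and $s$ whose bracket is nonzero, while $f_{u,v}$ must hold, the natural candidate is an algebra with only a handful of nonzero graded components, tailored so that the only nonvanishing bracket between like-parity components occurs precisely in degrees $r$ and $s$. Concretely, I would take $L$ with homogeneous generators $a$ in degree $r$ and $b$ in degree $s$, declare $[a,b]=c$ to be a nonzero element of degree $r+s$, and set all other products of graded components to zero (so $L$ is a nilpotent Heisenberg-type algebra $\mathrm{span}\{a,b,c\}$ with the single relation $[a,b]=c$). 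One must check the grading is consistent: $\|a\|=r$, $\|b\|=s$, $\|c\|=r+s$, and the Jacobi identity holds trivially since all iterated brackets of length $\ge 3$ vanish.

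The key verification is that $L$ satisfies $f_{u,v}$ for every admissible $(u,v)\neq(r,s)$ with $u\equiv v\pmod 2$. An admissible substitution into $f_{u,v}=[x_1^u,x_2^v]$ plugs in homogeneous elements of $L$ of degrees $u$ and $v$. Because $L$ has nonzero components only in degrees $r$, $s$, and $r+s$, the bracket can be nonzero only when the two arguments come from degrees $r$ and $s$ (in some order), i.e.\ when $\{u,v\}=\{r,s\}$; since we assumed $u\le v$ and $(u,v)\neq(r,s)$, this does not happen, so $[x_1^u,x_2^v]$ evaluates to $0$ and $f_{u,v}$ holds on $L$. (A minor case to handle: if $r=s$, one adjusts by taking two independent generators $a_1,a_2$ in the common degree with $[a_1,a_2]=c\neq 0$, so that $f_{r,r}$ still fails.) Meanwhile $f_{r,s}$ fails on $L$ via the substitution $x_1^r\mapsto a$, $x_2^s\mapsto b$, giving $[a,b]=c\neq 0$.

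The main obstacle, and the point demanding the most care, is ensuring that no \emph{other} like-parity pair of degrees can arise and produce a nonzero bracket on $L$ — in particular the degrees $r+s$ (the degree of $c$) could coincide in parity with $r$ or $s$, so I must confirm that brackets involving $c$, such as $[c,a]$ or $[c,b]$, are genuinely zero in $L$ and that this is compatible with the grading and Jacobi identity. Setting all length-$\ge 3$ products to zero makes $L$ two-step nilpotent and resolves this, but one should double-check that declaring these products zero does not force $c=0$ through some Jacobi relation; since the only nontrivial bracket is $[a,b]=c$ and $c$ is central, the Jacobi identity is automatic. A cleaner route, which I would present if the ad hoc algebra becomes delicate, is to realize such an $L$ as a graded quotient or subquotient of $U_1$ itself (or of a relatively free graded algebra), thereby inheriting consistency of the grading and the Jacobi identity for free; I expect the elementary Heisenberg construction to suffice, with the parity bookkeeping being the only genuinely technical step.
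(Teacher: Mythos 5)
Your proposal is correct and takes essentially the same route as the paper: the paper's separating algebra is $H=UT(3,K)$, the strictly upper triangular $3\times 3$ matrices, which is exactly the graded Heisenberg algebra you construct (with $E_{12},E_{23},E_{13}$ playing the roles of $a,b,c$ in degrees $r,s,r+s$), and it uses the same case adjustment when $r=s$. Your extra care about brackets involving $c$ and the ordering convention $u\le v$, $r\le s$ is precisely what makes the verification go through, so there is no gap.
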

\begin{proof}
Let $r$, $s\in\mathbb{Z}$ be of the same parity, 
and let $H=UT(3, K)$ be the Lie algebra of strictly upper triangular $3\times 3$ matrices over $K$. Define the vector subspaces $H_k$, $k\in\mathbb{Z}$, in $H$ as follows: 
\begin{enumerate}
	\item if $r\neq s$ we set $H_k=0$ for all $k\neq r$, $s$ and $r+s$; $H_r$ is the span of $E_{12}$, $H_s$ is the span of $E_{23}$ and $H_{r+s}$ is the span of $E_{13}$;
	\item if $r=s=0$ then $H_0=H$, and $H_k=0$ for every $k\neq 0$; 
	\item if $r=s\neq 0$ then $H_r$ is spanned by $E_{12}$ and $E_{23}$, $H_{2r}$ is spanned by $E_{13}$, and $H_k=0$ for every $k\ne r$, $2r$.
\end{enumerate}
Here $E_{ij}$ is the matrix that has $1$ at position $(i,j)$ and 0 elsewhere. It is clear that $H=\oplus_{i\in\mathbb{Z}}H_i$ is a $\mathbb{Z}$-graded Lie algebra.  Since $[E_{12},E_{23}]=E_{13}\neq0$, the graded identity $[x^r_1, x^s_2]$ is not satisfied in $H$. On the other hand, one can easily see that $H$ satisfies all graded identities \eqref{Cbasis1} as well as all identities $f_{u,v}$ 
when $(r,s)\neq (u,v)$. The result follows.
\end{proof}

A set $I$ of (graded) polynomials is an independent set of (graded) identities if neither of them lies in the ideal of (graded) identities generated by the remaining ones. 

\begin{corollary}
The set of polynomials $\{f_{r,s}\mid r,s\in\mathbb{Z}, r\le s, \quad r\equiv s\pmod{2}\}$ is an independent set of graded identities in $L\langle X_\mathbb{Z}\rangle$.
\end{corollary}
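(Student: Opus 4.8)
The plan is to derive the Corollary directly from the preceding Lemma, which already does all the essential work. The Corollary asserts that the full family $\{f_{r,s}\}$ (with $r\le s$ and $r\equiv s\pmod 2$) is an independent set of graded identities, meaning that no single $f_{r,s}$ lies in the $T_\mathbb{Z}$-ideal generated by all the others. First I would fix an arbitrary pair $(r,s)$ with $r\le s$ and $r\equiv s\pmod 2$, and recall that by definition of independence I must show that $f_{r,s}\notin \langle\, f_{u,v} : (u,v)\neq (r,s)\,\rangle_\mathbb{Z}$, where the generating set ranges over all admissible pairs $(u,v)$ other than $(r,s)$ itself.

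The key observation is that the Lemma produces, for each fixed $(r,s)$, a $\mathbb{Z}$-graded Lie algebra $H = UT(3,K)$ with a carefully chosen grading (split into the three cases $r\neq s$, $r=s=0$, and $r=s\neq 0$) that \emph{violates} $f_{r,s}$ while \emph{satisfying} every other $f_{u,v}$. So the second step is to spell out that this $H$ witnesses the non-membership: the $T_\mathbb{Z}$-ideal generated by $\{f_{u,v} : (u,v)\neq (r,s)\}$ is contained in $T_\mathbb{Z}(H)$, because $H$ satisfies each generator $f_{u,v}$ (and $T_\mathbb{Z}(H)$ is itself a $T_\mathbb{Z}$-ideal, hence closed under the operations that generate the ideal from its basis). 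Since $f_{r,s}\notin T_\mathbb{Z}(H)$, we conclude $f_{r,s}\notin \langle f_{u,v} : (u,v)\neq (r,s)\rangle_\mathbb{Z}$.

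Thus the proof reduces to the routine verification, already packaged in the Lemma, that $H$ separates $f_{r,s}$ from the rest. I would simply invoke the Lemma for the chosen pair $(r,s)$ and note that its conclusion is precisely the statement that $f_{r,s}$ does not follow from the remaining identities. Because $(r,s)$ was arbitrary, this holds for every member of the family simultaneously, which is exactly the definition of an independent set. The only mildly delicate point is to make explicit the inclusion $\langle f_{u,v}\rangle_\mathbb{Z} \subseteq T_\mathbb{Z}(H)$: one uses that a $T_\mathbb{Z}$-ideal generated by a set $S$ is the smallest $T_\mathbb{Z}$-ideal containing $S$, and since $T_\mathbb{Z}(H)$ is a $T_\mathbb{Z}$-ideal containing each $f_{u,v}$ with $(u,v)\neq(r,s)$, it contains the whole generated ideal.

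I do not expect a genuine obstacle here, as the heavy lifting—constructing the separating algebra $H$ and checking its graded identities case by case—is exactly what the Lemma has already accomplished. The main point to be careful about is purely formal: independence is a statement \emph{for each} element of the set, so I must apply the Lemma once per pair $(r,s)$ rather than seeking a single universal algebra, and I should state clearly that the quantifier ``for all $(r,s)$'' is what upgrades the Lemma's per-pair conclusion into the set-theoretic independence asserted in the Corollary.
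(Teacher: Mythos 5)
Your proposal is correct and follows exactly the paper's route: the paper states this corollary as an immediate consequence of the preceding Lemma, whose algebra $H=UT(3,K)$ (graded per pair $(r,s)$) witnesses that $f_{r,s}\notin T_\mathbb{Z}(H)\supseteq \langle f_{u,v} : (u,v)\neq (r,s)\rangle_\mathbb{Z}$. Your explicit spelling-out of the standard fact that the generated $T_\mathbb{Z}$-ideal lies inside $T_\mathbb{Z}(H)$ is precisely the routine step the paper leaves implicit.
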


The above statements together with Theorem \ref{mainresult1} yield the following theorem.

\begin{theorem}\label{bseminimalU_1}
	Over a field of characteristic two, the graded identities 
	\[
	[x_1^a,x_2^b] \equiv 0, \quad a\le b
	\]
	where $a$ and $b$ are of the same parity, form a minimal basis for the $\mathbb{Z}$-graded identities of $U_1$.
\end{theorem}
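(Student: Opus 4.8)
The plan is simply to assemble the two ingredients already in hand: Theorem \ref{mainresult1}, which says that the family
$S=\{f_{a,b}=[x_1^a,x_2^b] : a\le b,\ a\equiv b\pmod 2\}$
generates the whole $T_\mathbb{Z}$-ideal $T_\mathbb{Z}(U_1)$, and the preceding Corollary, which says that $S$ is an independent set of graded identities. A basis that is independent is precisely a minimal one, so there is essentially nothing left to do beyond making this implication explicit.

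First I would note that Theorem \ref{mainresult1} gives $\langle S\rangle_\mathbb{Z}=T_\mathbb{Z}(U_1)$; hence $S$ is a basis of the ideal of graded identities of $U_1$. It then remains to check minimality, which I interpret — as is standard for an infinite basis — as irredundancy: no proper subset of $S$ generates $T_\mathbb{Z}(U_1)$.

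For this it suffices to delete a single element and show that the basis property fails. Fix a pair $(r,s)$ with $r\le s$ and $r\equiv s\pmod 2$, and set $J=\langle S\setminus\{f_{r,s}\}\rangle_\mathbb{Z}$. By the preceding Lemma, $f_{r,s}$ is not a consequence of the identities $f_{u,v}$ with $(u,v)\neq(r,s)$, so $f_{r,s}\notin J$. On the other hand $f_{r,s}\in T_\mathbb{Z}(U_1)$ by Lemma \ref{3.6}. Therefore $J\subsetneq T_\mathbb{Z}(U_1)$, and $S\setminus\{f_{r,s}\}$ fails to be a basis. Since $(r,s)$ was arbitrary, $S$ is a minimal basis.

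I do not expect a genuine obstacle here: all the substantive content — reducing $T_\mathbb{Z}(U_1)$ to the commutator relations, and constructing via $H=UT(3,K)$, with a suitable $\mathbb{Z}$-grading, an algebra that separates $f_{r,s}$ from the remaining relations — was already carried out in Theorem \ref{mainresult1} and in the preceding Lemma. The only point that deserves care is the formal equivalence \emph{independent basis} $\Leftrightarrow$ \emph{minimal basis}: one must argue, as above, that independence forbids the removal of any single generator, which is exactly the appropriate notion of minimality for a basis that is not finite.
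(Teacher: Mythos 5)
Your proposal is correct and follows essentially the same route as the paper: the paper likewise obtains this theorem by combining Theorem \ref{mainresult1} (generation) with the preceding lemma and corollary (independence), and your explicit argument that independence forbids removing any single $f_{r,s}$ is just the spelled-out version of that step.
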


For the Lie algebra $W_1$, we add to the list of identities the variables $x^c$ with $c< -1$. Note that the identity $f_{-1,-1}=[x^{-1}_1, x^{-1}_2]$ is a consequence of $x^{-2}$.

\begin{lemma}
For each $d\in\mathbb{Z}$, the graded identity $x^d$ is not a consequence of the graded identities \eqref{Cbasis1} and all identities $x^c$ where $c\neq d$.
\end{lemma}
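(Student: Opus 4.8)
The plan is to prove this independence statement by the standard method of exhibiting a separating graded algebra. Recall that to show $x^d$ does not lie in the $T_\mathbb{Z}$-ideal generated by a set $S$ of graded polynomials, it suffices to produce a single $\mathbb{Z}$-graded Lie algebra $A$ with $S\subseteq T_\mathbb{Z}(A)$ but $x^d\notin T_\mathbb{Z}(A)$: indeed $T_\mathbb{Z}(A)$ is then a $T_\mathbb{Z}$-ideal containing $S$, hence containing $\langle S\rangle_\mathbb{Z}$, from which $x^d$ is excluded. Here $S$ consists of the polynomials \eqref{Cbasis1} together with all the graded polynomials $x^c$ with $c\neq d$.

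First I would translate the requirements on $A$ into concrete conditions: we need $A_c=0$ for every $c\neq d$ occurring among the identities of $S$, so that each such $x^c$ is a graded identity of $A$; we need $A_d\neq 0$, so that $x^d$ fails on $A$; and we need every admissible evaluation of $[x_1^a,x_2^b]$ with $a\equiv b\pmod 2$ to vanish. All three conditions are met by the simplest possible choice: let $A$ be the one-dimensional abelian Lie algebra concentrated in $\mathbb{Z}$-degree $d$, that is $A=A_d$ with $\dim_K A_d=1$ and $A_k=0$ for every $k\neq d$.

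I would then verify the three points, each of which is immediate. The grading axiom holds because the only bracket that could be nonzero, $[A_d,A_d]$, vanishes since $A$ is abelian, so $[A_d,A_d]=0\subseteq A_{2d}$. As $A$ is abelian it satisfies every commutator identity, in particular all of \eqref{Cbasis1}. Since $A_c=0$ for all $c\neq d$, every graded polynomial $x^c$ with $c\neq d$ is identically zero on $A$. Finally, choosing a nonzero $v\in A_d$ yields $v\neq 0$, so $x^d$ is not a graded identity of $A$; this establishes that $x^d\notin\langle S\rangle_\mathbb{Z}$.

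There is no serious obstacle here. Unlike the preceding lemma, where separating the various $f_{r,s}$ required a carefully graded copy of $UT(3,K)$ so as to keep all the \emph{other} bracket identities valid, in the present situation a trivial algebra suffices, precisely because the support of $A$ can be arranged to be exactly $\{d\}$. The one point worth remarking is that the construction is uniform in $d\in\mathbb{Z}$; in particular, for $d\geq -1$, where $x^d$ is not even a graded identity of $W_1$, the same algebra still works, and in that case the conclusion also follows trivially, since every consequence of graded identities of $W_1$ is again a graded identity of $W_1$.
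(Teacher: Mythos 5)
Your proof is correct and coincides with the paper's own argument: both take the one-dimensional $\mathbb{Z}$-graded Lie algebra concentrated in degree $d$, observe that it satisfies \eqref{Cbasis1} (being abelian) and all $x^c$ with $c\neq d$ (having zero components there), yet fails $x^d$. Your write-up merely spells out the verification and the $T_\mathbb{Z}$-ideal reasoning in more detail than the paper does.
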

\begin{proof}
Let $d\in\mathbb{Z}$ and let $H$ be the 1-dimensional Lie algebra over $K$. The algebra $H=\oplus_{i\mathbb{Z}}H_i$ is $\mathbb{Z}$-graded with $i$-th homogeneous component $H_i$ equal to $H$ if $i =d$ and 0 otherwise. It is clear that $H$ satisfies the graded identities \eqref{Cbasis1} as well as all graded identities $x^c$ where $c\neq d$ but does not satisfy the identity $x^d$. 
\end{proof}

\begin{corollary}
	The set of polynomials $\{x_d\mid d \leq -2\}$ is an independent set of graded identities in $L\langle X_\mathbb{Z}\rangle$.
\end{corollary}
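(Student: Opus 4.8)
The plan is to obtain this corollary as an immediate consequence of the preceding lemma. By definition of independence, to prove that the set $\{x_d\mid d\leq -2\}$ is an independent set of graded identities I must verify that for each fixed $d\leq -2$ the identity $x^d$ does not lie in the $T_\mathbb{Z}$-ideal generated by the remaining members of the set, namely by the collection $\{x_c\mid c\leq -2,\ c\neq d\}$.

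First I would fix such a $d$ and record the key containment of generating families: the set $\{x_c\mid c\leq -2,\ c\neq d\}$ is a subset of the larger generating family appearing in the preceding lemma, which consists of the polynomials \eqref{Cbasis1} together with \emph{all} identities $x^c$ for $c\neq d$. Passing to a smaller generating set can only produce a smaller ideal, so the $T_\mathbb{Z}$-ideal generated by $\{x_c\mid c\leq -2,\ c\neq d\}$ is contained in the $T_\mathbb{Z}$-ideal generated by that larger family.

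Next I would invoke the preceding lemma, which asserts precisely that $x^d$ is not a consequence of \eqref{Cbasis1} and all $x^c$ with $c\neq d$; in other words, $x^d$ lies outside the larger ideal. Combined with the containment of the previous paragraph, it follows a fortiori that $x^d$ lies outside the smaller ideal generated by $\{x_c\mid c\leq -2,\ c\neq d\}$. Since $d\leq -2$ was arbitrary, no element of the set is a consequence of the others, which is exactly the asserted independence.

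I do not expect any genuine obstacle here, since the substantive content has already been established in the preceding lemma: the one-dimensional graded model constructed there simultaneously satisfies \eqref{Cbasis1} and every $x^c$ with $c\neq d$ while failing to satisfy $x^d$. The only point requiring a moment of care is the direction of the ideal containment, which runs the correct way precisely because restricting to a smaller set of generators cannot enlarge the ideal it generates.
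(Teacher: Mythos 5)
Your proposal is correct and matches the paper's reasoning: the paper states this corollary without proof as an immediate consequence of the preceding lemma, and your argument (a smaller generating set yields a smaller $T_\mathbb{Z}$-ideal, so $x^d$ lying outside the larger ideal from the lemma implies it lies outside the ideal generated by $\{x_c \mid c\leq -2,\ c\neq d\}$) is exactly the implicit a fortiori step. No gaps.
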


\begin{theorem}
The graded identities $x^c$ ($c\leq -2$) and $[x^a_1, x^b_2]$,
where $a$ and $b$ are of the same parity, with $0\le a\le b$, form a minimal basis for the $\mathbb{Z}$-graded identities of $W_1$ over a field $K$ of characteristic 2.
\end{theorem}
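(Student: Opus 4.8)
The plan is to combine the three ingredients the paper has already assembled: the basis of graded identities for $W_1$ (Theorem \ref{ideW1}), the independence of the individual bracket relations (established in the lemma and corollary on the $f_{r,s}$), and the independence of the variable identities $x^c$ (the lemma and corollary immediately preceding). The statement to prove is that the listed identities form a \emph{minimal} basis, so there are two things to verify: that the set is in fact a basis of $T_\mathbb{Z}(W_1)$, and that no proper subset is.

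First I would argue that the stated set is a basis. By Theorem \ref{ideW1} the $\mathbb{Z}$-graded identities of $W_1$ are generated by $x^c\equiv 0$ for $c\le -2$ together with the brackets $[x_1^a,x_2^b]\equiv 0$ with $a,b>-2$ and $a\equiv b\pmod 2$. The current statement restricts the brackets to $0\le a\le b$ of the same parity, so I must check nothing is lost. The ordering $a\le b$ costs nothing since $[x_1^a,x_2^b]$ and $[x_1^b,x_2^a]$ generate the same $T_\mathbb{Z}$-ideal (anticommutativity). The genuine point is passing from $a,b>-2$ to $a,b\ge 0$: the only same-parity pair with a negative entry allowed by Theorem \ref{ideW1} is $(-1,-1)$, and as the paper remarks just above, $f_{-1,-1}=[x_1^{-1},x_2^{-1}]$ is already a consequence of $x^{-2}$ (substituting into the identity $x^{-2}\equiv 0$, since an element of $\mathbb{Z}$-degree $-1$ bracketed with another of degree $-1$ lands in degree $-2$, where $W_1$ vanishes). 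Hence dropping $(-1,-1)$ loses nothing, and the displayed set generates $T_\mathbb{Z}(W_1)$.

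Next I would establish minimality, i.e.\ that each identity in the list fails to be a consequence of the others. For the variable identities $x^c$, $c\le -2$, this is exactly the lemma proved above with the one-dimensional algebra concentrated in degree $d=c$: that algebra satisfies every $[x_1^a,x_2^b]$ and every $x^{c'}$ with $c'\ne c$, but not $x^c$. For the brackets $[x_1^a,x_2^b]$ with $0\le a\le b$ and $a\equiv b\pmod 2$, I would invoke the $UT(3,K)$ construction from the $f_{r,s}$ lemma: that graded algebra violates the single relation $[x_1^a,x_2^b]$ while satisfying all other brackets of \eqref{Cbasis1}. The only extra check is that this $UT(3,K)$ grading also satisfies the variable identities $x^c$, $c\le -2$; since the construction places all nonzero components in degrees $a$, $b$, $a+b$ (or $a$, $2a$), all of which are $\ge 0$ when $a,b\ge 0$, every component of negative degree is zero, so each $x^c$ with $c\le -2$ indeed holds. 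Thus removing any single identity from the list leaves a strictly smaller $T_\mathbb{Z}$-ideal.

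The main obstacle is not conceptual but bookkeeping: one must be careful that the two families of witness algebras are \emph{simultaneously} compatible with \emph{all} the remaining identities, not merely with the identities of their own type. Concretely, the one-dimensional algebra must satisfy every bracket relation (it does, being concentrated in a single degree, so any bracket output lies in a trivial component), and the $UT(3,K)$ algebra must satisfy every variable identity $x^c$ with $c\le -2$ (it does, by the degree count above). Once these cross-checks are in place the minimality is immediate, and together with the basis claim the theorem follows.
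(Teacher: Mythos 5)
Your overall architecture---deriving the basis property from Theorem \ref{ideW1} plus the remark on $f_{-1,-1}$, then getting minimality from the two witness constructions together with the cross-compatibility checks---is exactly the route the paper intends (the paper states this theorem without a separate proof, letting it follow from the preceding lemmas and corollaries). Your two cross-checks are correct and genuinely needed: the one-dimensional witness satisfies every bracket identity, and the $UT(3,K)$ witness for a bracket with $0\le a\le b$ has support in non-negative degrees, hence satisfies every $x^c$ with $c\le -2$. The minimality half of your argument is sound.

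The generation step, however, contains a false claim which is a genuine gap. You assert that the only same-parity pair admitted by Theorem \ref{ideW1} but excluded by the restriction $0\le a\le b$ is $(-1,-1)$. That is not so: Theorem \ref{ideW1} allows all pairs with $a,b>-2$ of the same parity, hence also $(-1,1),(-1,3),(-1,5),\dots$, and all of these are dropped when you require $a\ge 0$. Moreover, these dropped identities are \emph{not} consequences of the restricted list. Take the Heisenberg algebra $H=\langle E_{12},E_{23},E_{13}\rangle\subset UT(3,K)$ graded by $H_{-1}=\langle E_{12}\rangle$, $H_{1}=\langle E_{23}\rangle$, $H_{0}=\langle E_{13}\rangle$, and $H_k=0$ otherwise; this is a legitimate $\mathbb{Z}$-grading. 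Then $H$ satisfies every $x^c$ with $c\le-2$ (its support is $\{-1,0,1\}$) and every $[x_1^a,x_2^b]$ with $0\le a\le b$ of the same parity (any admissible substitution either hits a zero component or two proportional elements), yet $[E_{12},E_{23}]=E_{13}\neq 0$, so $H$ violates $[x_1^{-1},x_2^{1}]$, which \emph{is} a graded identity of $W_1$ in characteristic $2$ because $[e_{-1},e_1]=2e_0=0$. So the displayed set does not generate $T_\mathbb{Z}(W_1)$, and no argument can close this gap: the statement as literally written is false. (The oversight is inherited from the paper itself, whose preamble disposes only of the pair $(-1,-1)$.) The correct formulation takes the brackets over $-1\le a\le b$ of the same parity with $(a,b)\neq(-1,-1)$; with that list your proof goes through essentially verbatim: generation holds because the only identity of Theorem \ref{ideW1} that is dropped is $f_{-1,-1}$, which follows from $x^{-2}$, and minimality holds because the $UT(3,K)$ witness for a pair with $a\ge -1$ has support in degrees $\ge -1$, so it still satisfies every $x^c$ with $c\le -2$.
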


The next corollary is a direct consequence of the above theorem together with Theorem \ref{bseminimalU_1}.
\begin{corollary}
	Over a field of characteristic two, the $\mathbb{Z}$-graded identities for the Lie algebra $U_1$, as well as $W_1$, do not admit any finite bases.
\end{corollary}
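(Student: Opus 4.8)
The plan is to deduce the non-finite-basis property directly from the \emph{independence} of the infinite minimal bases already exhibited for $U_1$ and $W_1$. The single general principle I would invoke is the following finiteness fact: if a polynomial $g$ lies in a $T_\mathbb{Z}$-ideal $\langle B \rangle_\mathbb{Z}$ generated by a (possibly infinite) set $B$, then in fact $g \in \langle F \rangle_\mathbb{Z}$ for some \emph{finite} subset $F \subseteq B$. This holds because any element of $\langle B \rangle_\mathbb{Z}$ is obtained from the generators by finitely many applications of the defining operations (linear combinations, brackets with arbitrary elements, and graded endomorphisms of $L\langle X_\mathbb{Z}\rangle$), so the derivation of the single polynomial $g$ can mention only finitely many members of $B$.

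First I would treat $U_1$. Write $B = \{\, f_{a,b} = [x_1^a,x_2^b] \mid a \le b,\ a \equiv b \pmod 2 \,\}$ for the infinite minimal basis of Theorem~\ref{bseminimalU_1}. Suppose, for contradiction, that $T_\mathbb{Z}(U_1) = \langle S \rangle_\mathbb{Z}$ for a finite set $S = \{g_1,\ldots,g_m\}$. Each $g_j$ lies in $T_\mathbb{Z}(U_1) = \langle B \rangle_\mathbb{Z}$, so by the finiteness principle there is a finite $F_j \subseteq B$ with $g_j \in \langle F_j \rangle_\mathbb{Z}$. Putting $F_0 = F_1 \cup \cdots \cup F_m$, a finite subset of $B$, we get $S \subseteq \langle F_0 \rangle_\mathbb{Z}$, whence $T_\mathbb{Z}(U_1) = \langle S \rangle_\mathbb{Z} \subseteq \langle F_0 \rangle_\mathbb{Z} \subseteq T_\mathbb{Z}(U_1)$, and therefore $T_\mathbb{Z}(U_1) = \langle F_0 \rangle_\mathbb{Z}$ with $F_0$ a finite subset of $B$.

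Since $B$ is infinite while $F_0$ is finite, I may pick some $f_{a',b'} \in B \setminus F_0$. Then $f_{a',b'} \in T_\mathbb{Z}(U_1) = \langle F_0 \rangle_\mathbb{Z}$, and $F_0 \subseteq B \setminus \{f_{a',b'}\}$, so $f_{a',b'}$ belongs to the $T_\mathbb{Z}$-ideal generated by the remaining elements of $B$. This directly contradicts the independence of $B$ recorded in the corollary preceding Theorem~\ref{bseminimalU_1}. Hence $T_\mathbb{Z}(U_1)$ admits no finite basis.

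The same argument applies verbatim to $W_1$, now using its minimal basis consisting of the identities $x^c$ ($c \le -2$) together with $[x_1^a,x_2^b]$ for $0 \le a \le b$ with $a \equiv b \pmod 2$; this basis is again infinite and independent, so the identical finiteness-plus-independence scheme forbids any finite basis. There is no serious obstacle in this proof: the only point requiring care is the finiteness principle above, which I would state explicitly since it is precisely the observation that deducing one graded identity from a family uses only finitely many of them; everything else is a formal consequence of the minimality and independence already proved.
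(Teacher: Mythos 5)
Your proposal is correct and follows exactly the route the paper intends: the paper states the corollary as a direct consequence of the infinite minimal (independent) bases in Theorem~\ref{bseminimalU_1} and the theorem for $W_1$, and your argument simply spells out the standard finiteness principle (membership in $\langle B\rangle_\mathbb{Z}$ uses only finitely many generators, since $\langle B\rangle_\mathbb{Z}=\bigcup_{F\subseteq B \text{ finite}}\langle F\rangle_\mathbb{Z}$) that makes this implication work. The details you supply are precisely what the paper leaves implicit, so there is nothing to correct.
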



\begin{thebibliography}{99}

\bibitem{ssaca}
S. S. Azevedo, \textit{Graded identities for the matrix algebra of
order $n$ over an infinite field}, Commun. Algebra \textbf{30 (12)}  (2002), 5849--5860.

\bibitem{ssaserd}
S. S. Azevedo, \textit{A basis for $\mathbb{Z}$-graded identities of
matrices over infinite fields}, Serdica \textbf{29} (2003), 149--158.


\bibitem{bsz}
Y. Bahturin, S. Sehgal, M. Zaicev, \textit{Group gradings on associative algebras}, J. Algebra \textbf{241} (2001), 677--698.

\bibitem{bz}
Y. Bahturin, M. Zaicev, \textit{Graded algebras and graded identities}, Polynomial identities and combinatorial methods (Pantelleria,
2001), Lecture Notes in Pure and Appl. Math., vol. 235, Dekker, New York, 2003, pp. 101--139.

\bibitem{CC} C. de Concini, C. Procesi,  {\it A characteristic free approach to invariant theory}, Adv. in Math. {\bf 21} (1976), 330--354.

\bibitem{drbook}
V. Drensky, \textit{Free algebras and PI algebras. Graduate course in Algebra}, Springer, Singapore, 2000. 

\bibitem{CP} C. Fidelis, P. Koshlukov, {\it $\mathbb{Z}$-graded identities of the Lie algebras $U_1$}, submitted.

\bibitem{FKK} J. A. Freitas, P. Koshlukov, A. Krasilnikov, \textit{$\mathbb{Z}$-graded identities of the Lie algebra $W_1$}, J. Algebra {\bf 427} (2015), 226--251.

\bibitem{ZG} A. Giambruno, M. Zaicev, \textit{Polynomial identities and asymptotic methods}. AMS Mathematical Surveys and Monographs Vol. {\bf 122}, Providence, R.I., 2005.

\bibitem{Huang} Q. Huang, R. Zhdanov, {\it Realizations of the Witt and Virasoro algebras and integrable equations}, J. Nonlinear Math. Phys. {\bf27}, {\bf no. 1} (2020), 36--56.

\bibitem{kapl} I. Kaplansky, {\it The Virasoro Algebra}, Commun. Math. Phys. {\bf 86} (1982), 49--54.

\bibitem{kac}
V. Kac, \textit{Simple  irreducible graded Lie algebras of finite growth}, Izv. Akad. Nauk USSR, Ser. Mat. \textbf{32} (l968), 1923--1967. Transl.  Math. USSR Izv. \textbf{2} (l968), 1271--1311.

\bibitem{kacbook}
V. Kac, \textit{Infinite-Dimensional Lie Algebras}, 3rd ed., Cambridge Univ. Press, 1994.

\bibitem{basekemer} A. R. Kemer, {\it Ideals of identities of associative algebras}. Translations Math. Monographs {\bf 87} (1991), Providence, RI: Amer. Math. Soc.

\bibitem{kjam2}
P. Koshlukov, \textit{Basis of the identities of the matrix algebra of
  order two over a field of characteristic $p\ne 2$}, J. 
Algebra \textbf{241} (2001), 410--434. 

\bibitem{KS} P. Koshlukov, S. Azevedo. {\it A basis for the graded identities of the matrix algebra of order two over a finite field of characteristic $p\neq 2$}. Finite Fields Appl. {\bf 8}, {\bf no. 4} (2002), 597--609.

\bibitem{pkfyja}
P. Koshlukov, F. Yukihide, \textit{Group gradings on the Lie algebra of upper triangular matrices}, J. Algebra \textbf{477} (2017), 294--311.

\bibitem{koum}
D. Kozybaev, U. Umirbaev, \textit{Identities of the left-symmetric Witt algebras}, Internat. J. Algebra Comput. \textbf{26, No. 2} (2016), 435--450.

\bibitem{kruse} R. L. Kruse, {\it Identities satisfied by a finite ring}, J. Algebra {\bf 26} (1973), 298–318.

\bibitem{lvov} I. V. Lvov, {\it Varieties of associative rings}, Algebra i Logika \textbf{12 (3)} (1973), 269--297 (Russian); English transl.:  Algebra Logic {\bf 12 (3)} (1973), 150--167.

\bibitem{olmat}
O. Mathieu, \textit{Classification of  simple graded Lie algebras of finite growth}, Invent. Math. \textbf{108} (1992), 455--589.

\bibitem{OatesP} S. Oates, M. B. Powell, {\it
Identical relations in finite groups}. J. Algebra {\bf 1} (1964), 11--39.

\bibitem{popov}
A. Popov, \textit{Identities of the tensor square of a Grassmann algebra}, Algebra i Logika \textbf{21} (1982), 442--471 (Russian); English transl.: Algebra and Logic \textbf{21} (1982), 296--316.

\bibitem{razmbook}
Yu.~Razmyslov, \textit{Identities of algebras and their
  representations}, Translations Math. Monographs, \textbf{138}, AMS,
Providence, RI, 1994. 

\bibitem{vasal}
S. Vasilovsky, \textit{The basis of identities of a
three-dimensional simple Lie algebra over an infinite field}, Algebra i Logika \textbf{28} (1989), 534--554 (Russian); English transl.: Algebra 
Logic \textbf{28}, No.~5 (1989), 355--368.

\bibitem{vasca}
S. Yu.~Vasilovsky, \textit{$\mathbb{Z}$-graded polynomial
identities of the full matrix algebra}, Commun. Algebra
\textbf{26 (2)} (1998), 601--612.

\bibitem{vaspams}
S. Yu.~Vasilovsky, \textit{$\mathbb{Z}_{n}$-graded
polynomial identities of the full matrix algebra of order $n$},
Proc.~Amer.~Math.~Soc. \textbf{127 (12)} (1999),
3517--3524.
\end{thebibliography}
\end{document}